\documentclass[11pt]{article}

\usepackage{amsmath}
\usepackage{amsfonts}
\usepackage{amssymb}
\usepackage{amsthm}
\usepackage{bbm}
\usepackage{caption}
\usepackage{CJKutf8}
\usepackage{enumitem}
\usepackage{geometry}
\usepackage{graphicx}
\usepackage{import}
\usepackage{listings}
\usepackage{multicol}
\usepackage{nameref}
\usepackage{soul}
\usepackage{standalone}
\usepackage{titlesec}
\usepackage{tikz}
\usepackage{todonotes}
\usepackage{url}
\usepackage{xcolor}
\usepackage[many]{tcolorbox}
\usepackage[utf8]{inputenc}
\usepackage{authblk}
\usepackage[numbers,sort&compress]{natbib}
\usepackage{lineno}

\usepackage{hyperref} %

\newtheoremstyle{mytheoremstyle} %
    {\topsep}                    %
    {\topsep}                    %
    {\itshape}                   %
    {}                           %
    {\bfseries}                  %
    {:}                          %
    {.5em}                       %
    {}  %

\theoremstyle{mytheoremstyle}

\newtheorem{Theorem}{Theorem}
\newtheorem*{Theorem*}{Theorem}

\newtheorem{Lemma}{Lemma}
\newtheorem{Observation}{Observation}

\theoremstyle{definition}
\newtheorem{Definition}{Definition}
\newtheorem*{Definition*}{Informal Definition}

\newtcolorbox{problembox}[1]{
    tikznode boxed title,
    enhanced,
    arc=0mm,
    interior style={white},
    attach boxed title to top center= {yshift=-\tcboxedtitleheight/2},
    fonttitle=\bfseries,
    colbacktitle=white,coltitle=black,
    boxed title style={size=normal,colframe=white,boxrule=0pt},
    title={#1}
    }
    
\newcommand{\T}[0]{\mathcal{T}}

\newcommand{\Fr}{F_r}
\newcommand{\kr}{k_r}

\newcommand{\uMAF}{{\rm uMAF}}
\newcommand{\Fu}{{F}}
\newcommand{\ku}{k}

\newcommand{\FIG}[1]{{Figure \ref{FIG #1}}}

\newcommand{\LEM}[1]{{Lemma \ref{LEM #1}}}
\newcommand{\THM}[1]{{Theorem \ref{THM #1}}}

\usetikzlibrary{decorations.text}
\def\myshift#1{\raisebox{1ex}}

\newcommand{\vertex}{\node[vertex]}
\tikzstyle{vertex}=[draw, shape=circle, minimum size=0.1em, inner sep=1, fill]
\tikzstyle{every path}=[thick]

\colorlet{cyan}[rgb]{cyan}
\newcommand{\MyBlue}[0]{blue!50!cyan}
\newcommand{\MyGreen}[0]{green!75!blue}
\newcommand{\MyRed}[0]{red!85!cyan}

\newcommand{\MyOrange}[0]{orange!90!black}

\newcommand{\stevenrevise}[1]{\textcolor{black}{#1}}
\newcommand{\leorevise}[1]{\textcolor{black}{#1}}

\newcommand{\rvalue}[0]{r=\min\{\max\{\ku, 3\}, t+1\}}
\newcommand{\rrvalue}[0]{r=\min\{\max\{\kr, 3\}, t+1\}}
\newcommand{\uks}[0]{4tr\ku - 4tr - 3r\ku}

\newcommand{\rks}[0]{(2tr+1)(\kr-1)-r}

\title{A Kernel for the Maximum Agreement Forest Problem on Multiple Binary Phylogenetic Trees}
\author{S. Kelk, R. Meuwese, L.J.J. van Iersel \footnote{\url{steven.kelk@maastrichtuniversity.nl}, \url{L.J.J.vanIersel@tudelft.nl}, \url{R.H.Meuwese@tudelft.nl}. Kelk is at the Department of Advanced Computing Sciences, Maastricht University, The Netherlands and Meuwese and Van Iersel are at the Delft Institute of Applied Mathematics, Delft University of Technology, The Netherlands.}}
\date{}

\newcommand{\steven}[1]{\textcolor{black}{#1}}
\newcommand{\leo}[1]{#1}

\newcommand{\leonew}[1]{#1}
\newcommand{\leonewer}[1]{\textcolor{black}{#1}}

\newcommand{\stevenlater}[1]{\textcolor{black}{#1}}

\newcommand{\stevenfinal}[1]{\textcolor{black}{#1}}

\begin{document}

\maketitle

\begin{abstract}
The maximum agreement forest (MAF) problem in phylogenetics takes as input a set \stevenrevise{of} $t \geq 2$ binary phylogenetic trees $\T$ on the same set of taxa~$X$. It asks for a partition of~$X$ into the smallest number of blocks such that the subtrees induced by these blocks are disjoint and have common topology across all the trees in~$\T$. \stevenfinal{We produce a modified version of the well-known chain reduction rule in order to prove that after exhaustive application of reduction rules each tree has 
$O( t \cdot r \cdot k )$ leaves, where $k$ is the natural parameter (the number of blocks)
and $\rvalue$.} We prove this bound for both the unrooted and rooted version of the problem, and demonstrate that the bound $r$, the length to which common chains are truncated, is tight. Our results constitute the first kernels for MAF in the~$t>2$ regime. 
\end{abstract}

\section{Introduction}\label{SEC intro}

A phylogenetic tree $T$ on a set of species (or more generally, \emph{taxa}) $X$ is a tree whose leaves are bijectively labelled by $X$. Such trees are commonplace in the study of the evolution of species; interior nodes represent `branching' events in history, such as speciation events. Taken as a whole,~$T$ therefore represents
a hypothesis about how the species $X$ evolved from common ancestors \cite{SempleS03}. In this article we are only concerned with \emph{binary} phylogenetic trees, i.e. in which each interior node represents a bifurcation of lineages. These trees come in two variants: rooted, in which the direction of evolution is designated, and unrooted in which the direction of evolution has not yet been determined. 

Given a set of $t \geq 2$ phylogenetic trees $\T$, all on the same set $X$, the \emph{maximum agreement forest} problem \stevenrevise{(MAF)} asks for a partition of $X$ into the smallest number of blocks such that the subtrees induced by these blocks are disjoint and have common topology across all the trees in $\T$. MAF can be used as a measure of dissimilarity; %
this can be particularly useful if the tree inference process has constructed a set of competing tree hypotheses, not just one, and we wish to summarize their shared structure.

Unfortunately, the variants of MAF on rooted trees (rMAF) and unrooted trees (\uMAF) are both already NP-hard when $t=2$ \cite{AllenS01,HeinJWZ96,BordewichS04}. This has led to more than two decades of research aiming to tame this worst-case intractability, particularly in the area of parameterized complexity and approximation algorithms. Rather than list the full history of results in this area we refer to papers with detailed surveys of the literature such as \cite{BulteauW19,shi2018parameterized,shi2014algorithms,kelk2024deep}.

In this article our focus is on parameterized complexity. For $t=2$, both kernelization and branching algorithms are known. For \uMAF, a branching algorithm with running time $O^{*}(2.846^k)$ was recently proposed \cite{mestel2024split}, where here $k$ refers to the number of blocks in an optimal solution and `*' suppressess polynomial terms. The same article presents a branching algorithm for rMAF with running time $O^{*}(2.3391^k)$, very slightly improving upon the previous best of $O^{*}(2.3431^k)$~\cite{chen2015faster}. Linear kernels for {\uMAF} and rMAF were established in 2001 and 2004 respectively \cite{AllenS01,BordewichS04}, using two reduction rules known as the \emph{common subtree} and \emph{common chain} reduction rules. The state of the art kernel for rMAF augments this with a third reduction rule and (up to constant terms) achieves a kernel size of $9k$ \cite{kelk2023cyclic}. For \uMAF, augmenting the two original reduction rules with eight new rules also yields a kernel of size $9k$ \cite{kelk2024deep}. 

For $t>2$ progress has been comparatively slow. For rMAF, a branching algorithm exists that runs in time $O^{*}(2.42^k)$
\cite{shi2018parameterized}, and for {\uMAF} the best known branching algorithm runs in time
$O^{*}(4^k)$\cite{shi2014algorithms}. However, to date no \stevenrevise{explicit} kernelization results have been proven\footnote{\stevenrevise{The aforementioned branching algorithms in  
\cite{shi2018parameterized,shi2014algorithms} automatically imply the existence of a kernel whose size is \emph{exponential} in $k$. This leverages the well-known equivalence between the existence of fixed parameter tractable (FPT) algorithms and kernels, see e.g. Lemma 2.2 of \cite{DBLP:books/sp/CyganFKLMPPS15}.}
}.

In this article, we fill this gap by showing that the common subtree reduction rule, when combined with a modified version of the common chain reduction rule, \stevenfinal{reduces the number of leaves in each tree to} $O( t \cdot r \cdot k )$ where {$\rvalue$}
and $k$ is the natural parameter (the number of blocks). This bound holds for both {\uMAF} and rMAF. In proving this we have to forego the `generator' machinery that has been used in the recent $t=2$ kernelization literature to produce small kernels: this machinery breaks down for $t>2$. For this reason the analysis of the size of our kernel bears more resemblance to earlier work in this area, specifically 
\cite{AllenS01,BordewichS04}. The dependency on $t$ in the size of the kernel stems from the fact that the modified common chain rule truncates chains to length $r$, which is a function of $t$ and $k$; in contrast, in the $t=2$ kernelization literature truncation to constant length is sufficient. We show that our choice of $r$ is tight, i.e. there exist inputs in which shortening common chains to length less than $r$ alters the size of an optimum solution. \leonew{Since, for $t=2$, our bounds perfectly match the known bounds  under subtree and chain reduction, another contribution of this article is to provide a simpler proof of these kernelization results.}

Our kernel might be of interest to participants of the Parameterized Algorithms and Computational Experiments competition 2026 (PACE 2026), since the exact track of this competition asks participants to solve rMAF instances on potentially multiple trees \cite{pace2026}.

We conclude with a discussion of open problems and future research directions. \stevenrevise{Most notably, do rMAF and uMAF admit kernels whose size is at most \emph{polynomial} in \emph{only} $k$, as opposed to polynomial in $k$ and $t$, or exponential in $k$? }

\section{Preliminaries}\label{SEC pre}

        An \emph{unrooted binary phylogenetic tree} on $X$ is an undirected tree $T =(V(T),E(T))$ where every internal node has degree 3 and whose leaves \leo{(degree-1 nodes)} are bijectively labelled by a set of leaf labels (or more formally \emph{taxa}) $X$, where $|X|=n$.
        When it is clear from the context we will often refer to an unrooted binary phylogenetic tree as simply a tree.
        For every taxon $x\in X$ its \emph{parent} in a tree $T$ is the node %
        \leo{adjacent} to $x$. A \emph{cherry} in $T$ is a pair of distinct taxa $x, y \in X$ which have a common parent in $T$.
        
        We refer to $B \subseteq X$, with $|B| \geq 1$, as a \leo{\emph{block}}.
        For %
        \leo{a} block $B$ we write $T[B]$ to denote the %
        \leo{minimal subtree}
        of $T$ spanning exactly all the taxa in $B$. Furthermore we write $T|B$ to denote the %
        \leo{phylogenetic tree} obtained from $T[B]$ by suppressing nodes of degree 2. Subtree $T[B]$ is called \emph{pendant} in $T$ if there exists an edge such that deleting it splits $T$ into exactly two subtrees, where one tree has taxa $B$ and the other has \leo{taxa $X \setminus B$. The \emph{degree} of a block $B$ in a tree $T$, denoted as $deg^T(B)$, is the number of edges in~$T$ that have exactly one endpoint in~$T[B]$ (informally, the number of edges which need to be cut to split~$B$ from $X\setminus B$). Hence, $deg^T(B)=1$ precisely if~$T[B]$ is pendant.}
        
        For %
        \leorevise{$m\geq 1$}, let $C = (x_1, x_2 \ldots,x_m)$ be a sequence of distinct taxa in $X$. We call $C$ an \emph{$m$-chain} in $T$, or simply a \emph{chain} in $T$, if for each $i \in \{1, \ldots m-1\}$ the parent of $x_i$ is either equal to, or adjacent to, the parent of $x_{i+1}$\footnote{Due to the fact we are working with binary trees $x_i$ and $x_{i+1}$ can only share a parent if $i=1$ or $i=m-1$, and $\{x_i, x_{i+1}\}$ form a cherry.}.  Note that if
        $(x_1, x_2 \ldots,x_m)$ is an $m$-chain in $T$ then so is $(x_{m}, x_{m-1} \ldots,x_1)$.
        \leonewer{Chain $(x_1, \ldots,x_m)$ is \emph{maximal} if there is no chain $(y_1, \ldots,y_{m'})$ with $\{x_1, \ldots,x_m\}\subsetneq \{y_1, \ldots,y_{m'}\}$.}
        Examples of
        \leo{subtrees and chains}
        are given in \FIG{unrooted substructures}.

\begin{figure}[h]
    \begin{center}
    \begin{tikzpicture}[scale = 0.5]

    \node (F1) at (-3,0) {$T$};
    \vertex [label=above:$a$] (a) at (0,2) {};
    \vertex [label=left:$b$] (b) at (-1,1) {};
    \vertex [label=left:$c$] (c) at (-1,-1) {};
    \vertex [label=below:$d$] (d) at (0,-2) {};
    \vertex [label=below:$e$] (e) at (2,-1) {};
    \vertex [label=below:$f$] (f) at (3,-1) {};
    \vertex [label=below:$g$] (g) at (4,-1) {};
    \vertex [label=above:$h$] (h) at (6,2) {};
    \vertex [label=right:$i$] (i) at (7,1) {};
    \vertex [label=below:$j$] (j) at (5,-2) {};
    \vertex [label=below:$k$] (k) at (6,-3) {};
    \vertex [label=below:$l$] (l) at (7,-4) {};
    \vertex [label=below:$m$] (m) at (9,-4) {};
    
    \vertex (l1) at (0,-1){};
    \vertex (l2) at (1,0){};
    \vertex (l3) at (0,1){};
    \vertex (c1) at (2,0){};
    \vertex (c2) at (3,0){};
    \vertex (c3) at (4,0){};
    \vertex (r1) at (5,0){};
    \vertex (t1) at (6,1){};
    \vertex (r2) at (6,-1){};
    \vertex (r3) at (7,-2){};
    \vertex (r4) at (8,-3){};

    \draw
    (l1) edge (c)
    (l1) edge (d)
    (l2) edge (l1)
    (l2) edge (l3)
    (l3) edge (a)
    (l3) edge (b)
    (l2) edge (r1)
    (c1) edge (e)
    (c2) edge (f)
    (c3) edge (g)
    (r1) edge (t1)
    (t1) edge (h)
    (t1) edge (i)
    (r1) edge (r4)
    (r2) edge (j)
    (r3) edge (k)
    (r4) edge (l)
    (r4) edge (m)
    ;
    
    \end{tikzpicture}
    \caption{\leo{An unrooted binary phylogenetic tree~$T$ on~$X=\{a,\ldots,m\}$. For example,
    $T[\{a, b, c, d\}]$ is a pendant subtree, $(e, f, g)$ is a $3$-chain with $deg^T(\{e,f,g\})=2$ and $(j, k, l, m)$ is a $4$-chain in~$T$ with $deg^T(\{j,k,l,m\})=1$.}}
    \label{FIG unrooted substructures}
    \end{center}
\end{figure}

\leo{For leaf-labelled (not necessarily binary) trees~$T,T'$ both having leaf set~$X$,} we
        write $T = T'$ %
        if there is an isomorphism between $T$ and $T'$ that preserves the leaf labels~$X$. \leo{We are now ready to define the main structure studied in this paper.}

\begin{Definition}
        Let $\T$ be a set of unrooted phylogenetic trees on $X$. An \emph{(unrooted) agreement forest} $\Fu = \{B_1,B_2,\ldots,B_{\ku}\}$ for $\T$ is a partition of $X$ such that the following conditions hold. 
        \begin{enumerate}
            \item For all $i\in\{1,2,\ldots,{\ku}\}$, $T|B_i = T'|B_i$ for every pair of trees $T, T' \in \mathcal{T}$.
            \item For each pair $i,j\in\{1,2,\ldots,{\ku}\}$ with $i \neq j$, the subtrees $T[B_i]$ and $T[B_j]$ are vertex-disjoint in $T$ for every tree $T\in\T$.
        \end{enumerate}
\end{Definition}

\begin{figure}[h]
    \begin{center}
    \begin{tikzpicture}[scale = 0.5]

    \node (F1) at (-11,-5) {$T$};    
    \vertex [\MyGreen, label=left:$a$] (a) at (-9,-4) {};
    \vertex [\MyGreen, label=left:$b$] (b) at (-9,-6) {};
    \vertex [\MyGreen, label=below:$c$] (c) at (-8,-7) {};
    \vertex [\MyGreen, label=below:$d$] (d) at (-6,-7) {};
    \vertex [\MyBlue, label=right:$e$] (e) at (-5,-6) {};
    \vertex [\MyRed, label=right:$f$] (f) at (-5,-4) {};
    
    \vertex [\MyGreen] (l) at (-8,-5) {};
    \vertex [\MyGreen] (m) at (-7,-5) {};
    \vertex (r) at (-6,-5) {};
    \vertex [\MyGreen] (u) at (-7,-6) {};  

    \draw
    (m) edge [\MyGreen] (l)
    (m) edge (r)
    (m) edge [\MyGreen] (u)
    (l) edge [\MyGreen] (a)
    (l) edge [\MyGreen] (b)
    (u) edge [\MyGreen] (c)
    (u) edge [\MyGreen] (d)
    (r) edge (e)
    (r) edge (f)
    ;
    
    \node (F1) at (11,-5) {$T'$};
    \vertex [\MyGreen, label=left:$a$] (a) at (5,-4) {};
    \vertex [\MyGreen, label=right:$b$] (b) at (9,-4) {};
    \vertex [\MyGreen, label=below:$c$] (c) at (6,-7) {};
    \vertex [\MyGreen, label=below:$d$] (d) at (8,-7) {};
    \vertex [\MyBlue, label=left:$e$] (e) at (5,-6) {};
    \vertex [\MyRed, label=right:$f$] (f) at (9,-6) {};
    
    \vertex [\MyGreen] (l) at (6,-5) {};
    \vertex [\MyGreen] (m) at (7,-5) {};
    \vertex [\MyGreen] (r) at (8,-5) {};
    \vertex [\MyGreen] (u) at (7,-6) {}; 

    \draw
    (m) edge [\MyGreen] (l)
    (m) edge [\MyGreen] (r)
    (m) edge [\MyGreen] (u)
    (l) edge [\MyGreen] (a)
    (l) edge (e)
    (u) edge [\MyGreen] (c)
    (u) edge [\MyGreen] (d)
    (r) edge [\MyGreen] (b)
    (r) edge (f)
    ;
    
    \end{tikzpicture}
    \caption{Example of an unrooted agreement forest  $\Fu = \{\{a,b,c,d\}, \{e\}, \{f\} \}$ for phylogenetic trees $T$ and $T'$. This is a maximum agreeement forest: a smaller agreement forest is not possible.}
    \label{FIG example umaf}
    \end{center}
\end{figure}
        The \emph{size} of $\Fu$ is simply~$|\Fu|$, i.e., the number of blocks in the partition.
        An agreement forest with minimum \leo{size} is called a \emph{maximum} agreement forest (MAF), see Figure \ref{FIG example umaf} for an example. In this paper, we consider the following problem:

\begin{problembox}{The Unrooted Maximum Agreement Forest Problem}
\textbf{Input:} A set $\T$ of unrooted binary phylogenetic trees on $X$ and an integer~$\ku$.

\textbf{Question:} \textit{Does there esist an unrooted agreement forest $\Fu$ for $\T$ of size at most $\ku$?}%

\end{problembox}

There is also a \emph{rooted} version of the agreement forest problem. A \emph{rooted} binary phylogenetic tree on $X$ is a tree with a single indegree-0 and outdegree-2 \emph{root}, where the leaves are bijectively labelled by $X$ and all other nodes have indegree 1 and outdegree 2. Edges are directed away from the root. If $|X|=1$ then the single node labelled with the unique element of $X$ is regarded as a rooted binary phylogenetic tree.
In this rooted context $T = T'$ if there is an isomorphism between $T$ and $T'$ that respects the direction of the edges (and, as usual, preserves the leaf labels $X$). Core definitions, such as $T[B]$ and $T|B$, go through unchanged, \stevenrevise{with one crucial exception: when constructing $T|B$
from $T[B]$, the unique node of $T[B]$ with indegree-0 and outdegree-2 (i.e. its root) is not suppressed.} The substructures for rooted trees are similar to their unrooted counterparts, as shown in \FIG{rooted substructures}. However, chains are \stevenlater{slightly} different. In a rooted phylogenetic tree chains have an orientation, starting with the taxon closest to the root. For example in \FIG{rooted substructures} $(g,f,e)$ is a chain of $T$, but $(e,f,g)$ is not.

\begin{figure}[h]
    \begin{center}
    \begin{tikzpicture}[scale = 0.5]

    \node (F1) at (3,2) {$T$};
    \vertex [label=below:$a$] (a) at (-2,-6) {};
    \vertex [label=below:$b$] (b) at (0,-6) {};
    \vertex [label=below:$c$] (c) at (2,-6) {};
    \vertex [label=below:$d$] (d) at (4,-6) {};
    \vertex [label=below:$e$] (e) at (3,-3) {};
    \vertex [label=below:$f$] (f) at (4,-2) {};
    \vertex [label=below:$g$] (g) at (5,-1) {};
    \vertex [label=below:$h$] (h) at (7,-1) {};
    \vertex [label=below:$i$] (i) at (8,-2) {};
    \vertex [label=below:$j$] (j) at (9,-3) {};
    \vertex [label=below:$k$] (k) at (11,-3) {};

    \vertex (t1) at (6,2) {};
    \vertex (r1) at (8,0) {};
    \vertex (r2) at (9,-1) {};
    \vertex (r3) at (10,-2) {};
    
    \vertex (c1) at (4,0) {};
    \vertex (c2) at (3,-1) {};
    \vertex (c3) at (2,-2) {};
    \vertex (l1) at (1,-3) {};
    \vertex (l2) at (-1,-5) {};
    \vertex (l3) at (3,-5) {};

    \draw [thin]
    (t1) edge [->] (r1)
    (r1) edge [->] (r2)
    (r2) edge [->] (r3)
    (t1) edge [->] (c1)
    (c1) edge [->] (c2)
    (c2) edge [->] (c3)
    (c3) edge [->] (l1)
    (l1) edge [->] (l2)
    (l1) edge [->] (l3)
    
    (r1) edge [->] (h)
    (r2) edge [->] (i)
    (r3) edge [->] (j)
    (r3) edge [->] (k)
    (c1) edge [->] (g)
    (c2) edge [->] (f)
    (c3) edge [->] (e)
    (l2) edge [->] (a)
    (l2) edge [->] (b)
    (l3) edge [->] (c)
    (l3) edge [->] (d)
    ;
    
    \end{tikzpicture}
    \caption{\leo{A rooted binary phylogenetic tree~$T$. For example, $T[\{a, b, c, d\}]$ is a pendant subtree, $(g, f, e)$ is a 3-chain and $(h, i, j, k)$ a 4-chain in~$T$.}}
    \label{FIG rooted substructures}
    \end{center}
\end{figure}

Let $\T$ be a set of rooted binary phylogenetic trees on $X$. A \emph{(rooted) agreement forest} \leo{for~$\T$} is defined identically to the unrooted case; the only difference is the different semantics of isomorphism in the $T|B_i = T'|B_i$ condition. \leo{See Figure~\ref{FIG example rmaf} for an example.} 

\begin{figure}[h]
    \begin{center}
    \begin{tikzpicture}[scale = 0.5]

    \node (F1) at (-9,-3) {$T$};
    
    \vertex [\MyGreen, label=below:$a$] (a) at (-10,-6) {};
    \vertex [\MyGreen, label=below:$b$] (b) at (-6,-6) {};
    \vertex [\MyGreen, label=below:$c$] (c) at (-4,-6) {};
    \vertex [\MyBlue, label=below:$d$] (d) at (-9,-7) {};
    \vertex [\MyBlue, label=below:$e$] (e) at (-7,-7) {};
    
    \vertex [\MyGreen] (l) at (-8,-4) {};
    \vertex [\MyGreen] (r) at (-7,-5) {};
    \vertex [\MyGreen] (t) at (-7,-3) {};
    \vertex [\MyBlue] (de) at (-8,-6) {};  

    \draw
    (t) edge [\MyGreen, ->] (l)
    (t) edge [\MyGreen, ->] (c)
    (l) edge [\MyGreen, ->] (a)
    (l) edge [\MyGreen, ->] (r)
    (r) edge [->] (de)
    (r) edge [\MyGreen, ->] (b)
    (de) edge [\MyBlue, ->] (d)
    (de) edge [\MyBlue, ->] (e)
    ;
    
    \node (F1) at (9,-3) {$T'$};
    
    \vertex [\MyGreen, label=below:$a$] (a) at (4,-6) {};
    \vertex [\MyGreen, label=below:$b$] (b) at (6,-6) {};
    \vertex [\MyGreen, label=below:$c$] (c) at (7,-5) {};
    \vertex [\MyBlue, label=below:$d$] (d) at (8,-6) {};
    \vertex [\MyBlue, label=below:$e$] (e) at (10,-6) {};
    
    \vertex [\MyGreen] (l) at (5,-5) {};
    \vertex [\MyGreen] (r) at (8,-4) {};
    \vertex [\MyGreen] (t) at (7,-3) {};
    \vertex [\MyBlue] (de) at (9,-5) {};  

    \draw
    (t) edge [\MyGreen, ->] (l)
    (t) edge [\MyGreen, ->] (r)
    (l) edge [\MyGreen, ->] (a)
    (l) edge [\MyGreen, ->] (b)
    (r) edge [->] (de)
    (r) edge [\MyGreen, ->] (c)
    (de) edge [\MyBlue, ->] (d)
    (de) edge [\MyBlue, ->] (e)
    ;
    
    \end{tikzpicture}
    \caption{Example of a rooted agreement forest $\Fr = \{\{a,b,c\}, \{d, e\} \}$ for the rooted phylogenetic trees $T$ and $T'$. This is a maximum agreement forest because an agreement forest with fewer blocks is not possible.}
    \label{FIG example rmaf}
    \end{center}
\end{figure}

Now the problem becomes:

\begin{problembox}{The Rooted Maximum Agreement Forest Problem}
\textbf{Input:} A set $\T$ of rooted binary phylogenetic trees on $X$ and an integer~$\kr$.

\textbf{Question:} \textit{Does there exist a rooted agreement forest $\Fr$ for $\T$ of size at most~$\kr$?}%

\end{problembox}

Where necessary we will write uMAF and rMAF to distinguish the unrooted and rooted versions of the problem. 

We will use the following
observation freely in the remainder of the article, which holds for both uMAF and rMAF, and which follows directly from the definitions. It shows that the size of a maximum agreement forest is non-increasing under the action of deleting taxa from the trees.

\begin{Observation}
Let $\T$ be a set of (rooted or unrooted) binary phylogenetic trees on~$X$ and $F$ an agreement forest for $\T$. If $X' \subseteq X$ then $$F' = \{ B \cap X' \mid B \in F\leo{, B\cap X'\neq\emptyset} \}$$ is an agreement forest for $\mathcal{T}' = \{ T|X' \mid T \in \T \}$ and $|F'| \leq |F|$.
\label{obs:closed}
\end{Observation}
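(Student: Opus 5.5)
We prove Observation~\ref{obs:closed} by checking the partition property and then the two agreement-forest conditions for $F'$, working with one tree $T\in\T$ at a time. The key tool is the standard fact that restriction composes: for $B\subseteq X$ and $B\cap X'\neq\emptyset$,
$$(T|X')\,|\,(B\cap X') \;=\; T|(B\cap X') \;=\; (T|B)\,|\,(B\cap X').$$
Both sides are obtained from the minimal subtree of $T$ spanning $B\cap X'$ by suppressing degree-2 nodes. In the rooted case we keep the root of that spanning subtree rather than suppressing it, as in the definition.

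\textbf{Partition.} $F'$ is a partition of $X'$. Its blocks are nonempty by construction and pairwise disjoint, because the $B\in F$ are. They cover $X'$ because the $B\in F$ cover $X$. Since $B\mapsto B\cap X'$ maps onto $F'$ and discards the empty intersections, $|F'|\le|F|$.

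\textbf{Condition 1.} Take $T,T'\in\T$ and $B\in F$ with $B\cap X'\neq\emptyset$. We have $T|B=T'|B$ via a label-preserving isomorphism, which is edge-direction-respecting in the rooted case. This isomorphism carries the minimal subtree of $T|B$ spanning $B\cap X'$ onto the corresponding subtree of $T'|B$, so restricting both to $B\cap X'$ gives equal trees. By the composition identity, $(T|X')|(B\cap X')=(T'|X')|(B\cap X')$.

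\textbf{Condition 2.} This is the only step needing care, since $T|X'$ has suppressed nodes and so its vertex set is not literally a subset of $V(T)$. Identify each node of $T|X'$ with a node of $T[X']\subseteq T$: every edge of $T|X'$ corresponds to a path in $T[X']$. Under this correspondence, $(T|X')[B\cap X']$ corresponds to a subgraph of $T[B\cap X']$, which is contained in $T[B]$.

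Now suppose $(T|X')[B_i\cap X']$ and $(T|X')[B_j\cap X']$ share a node $v$ with $i\neq j$. Then the node of $T$ corresponding to $v$ lies in both $T[B_i]$ and $T[B_j]$. This contradicts the vertex-disjointness of $T[B_i]$ and $T[B_j]$ in $F$.

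The argument is identical for uMAF and rMAF. The only rooted-specific point is that the composition identity holds with the root-retention convention, because the root of the spanning subtree of $B\cap X'$ is the same node of $T$ whichever way it is computed.
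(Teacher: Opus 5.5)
Your proposal is correct and takes essentially the paper's route: the paper gives no explicit proof, saying only that the observation follows directly from the definitions, and your argument is that direct verification (the partition check, the restriction-composition identity for Condition~1, and identifying the nodes of $T|X'$ with nodes of $T[B\cap X'] \subseteq T[B]$ for Condition~2). Your treatment of the rooted case also holds up, since when $|B\cap X'|\geq 2$ the least common ancestor of $B\cap X'$ in $T$ has two children leading into $X'$, so it survives in $T|X'$ and is the retained root under either order of restriction.
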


In the main part of the article we will focus solely on uMAF. However, as we will point out later in Section \ref{subsec:rootedkern} the results go through almost entirely unchanged for rMAF.

\section{Reduction Rules}

    Let $\T$ be a set of unrooted binary phylogenetic trees on $X$ and~$S\subseteq X$. We say that~$S$ \emph{induces a common pendant subtree} of~$\T$ if \steven{$T[S]$ is pendant in every tree $T \in \T$} and \leo{
    $T[S]=T'[S]$ for each pair of trees~$T,T'\in\T$}\footnote{Note the use of $T[.]$ here: the pendant subtree should be `rooted' at the same location in every tree in $\T$.}.
    \leo{If~$S$ induces a common pendant subtree of~$\T$, then the \emph{subtree reduction rule} picks an arbitrary~$s\in S$ and replaces each tree~$T\in\T$ by~$T|((X\setminus S)\cup\{s\})$, see Figure~\ref{FIG example subtree reduction}.}
        
\begin{figure}[h]
    \begin{center}
    \begin{tikzpicture}[scale = 0.5]

    \node (F1) at (-4,0) {$T$};    
    \vertex [label=left:$a$] (a) at (-2,1) {};
    \vertex [label=left:$b$] (b) at (-2,-1) {};
    \vertex [label=below:$c$] (c) at (-1,-2) {};
    \vertex [label=below:$d$] (d) at (1,-2) {};
    \vertex [label=right:$e$] (e) at (2,-1) {};
    \vertex [label=right:$f$] (f) at (2,1) {};
    
    \vertex [] (l) at (-1,0) {};
    \vertex [] (m) at (0,0) {};
    \vertex [] (r) at (1,0) {};
    \vertex [] (u) at (0,-1) {};  

    \draw
    (m) edge [] (l)
    (m) edge [] (r)
    (m) edge [] (u)
    (l) edge [] (a)
    (l) edge [] (b)
    (u) edge [] (c)
    (u) edge [] (d)
    (r) edge [] (e)
    (r) edge [] (f)
    ;
    
    \node (F1) at (-4,-5) {$T'$};
    \vertex [label=left:$a$] (a) at (-2,-4) {};
    \vertex [label=right:$b$] (b) at (2,-6) {};
    \vertex [label=below:$c$] (c) at (-1,-7) {};
    \vertex [label=below:$d$] (d) at (1,-7) {};
    \vertex [label=left:$e$] (e) at (-2,-6) {};
    \vertex [label=right:$f$] (f) at (2,-4) {};
    
    \vertex [] (l) at (-1,-5) {};
    \vertex [] (m) at (0,-5) {};
    \vertex [] (r) at (1,-5) {};
    \vertex [] (u) at (0,-6) {};  

    \draw
    (m) edge [] (l)
    (m) edge [] (r)
    (m) edge [] (u)
    (l) edge [] (a)
    (l) edge [] (e)
    (u) edge [] (c)
    (u) edge [] (d)
    (r) edge [] (b)
    (r) edge [] (f)
    ;

    \node (F1) at (16,0) {$T_S$};    
    \vertex [label=left:$a$] (a) at (10,1) {};
    \vertex [label=left:$b$] (b) at (10,-1) {};
    \vertex [label=right:$e$] (e) at (14,-1) {};
    \vertex [label=right:$f$] (f) at (14,1) {};
    
    \vertex [] (l) at (11,0) {};
    \vertex [] (m) at (12,0) {};
    \vertex [] (r) at (13,0) {};
    \vertex [label=below:$c$] (u) at (12,-1) {};  

    \draw
    (m) edge [] (l)
    (m) edge [] (u)
    (m) edge [] (r)
    (l) edge [] (a)
    (l) edge [] (b)
    (r) edge [] (e)
    (r) edge [] (f)
    ;
    
    \node (F1) at (16,-5) {$T_S'$};
    \vertex [label=left:$a$] (a) at (10,-4) {};
    \vertex [label=right:$b$] (b) at (14,-6) {};
    \vertex [label=left:$e$] (e) at (10,-6) {};
    \vertex [label=right:$f$] (f) at (14,-4) {};
    
    \vertex [] (l) at (11,-5) {};
    \vertex [] (m) at (12,-5) {};
    \vertex [] (r) at (13,-5) {};
    \vertex [label=below:$c$] (u) at (12,-6) {};  

    \draw
    (m) edge [] (l)
    (m) edge [] (r)
    (m) edge [] (u)
    (l) edge [] (a)
    (l) edge [] (e)
    (r) edge [] (b)
    (r) edge [] (f)
    ;
    
    \draw 
    (3, 0) edge [out=0, in=180, looseness=1] (5, -1.25)
    (3, -5) edge [out=0, in=180, looseness=1] (5, -3.75)

    (5,-1.25) edge node [label=below:\small{Subtree}] {} (7,-1.25)
    (5,-3.75) edge node [label=above:\small{Reduction}] {} (7,-3.75)
    
    (7, -1.25) edge [->, out=0, in=180, looseness=1] (9, 0)
    (7, -3.75) edge [->, out=0, in=180, looseness=1] (9, -5)
    ;
        
    \end{tikzpicture}
    \caption{Example of applying the subtree reduction rule on $\T = \{T, T'\}$ with $S=\{c,d\}$, reducing the common pendant subtree to a single taxon $c$.}
    \label{FIG example subtree reduction}
    \end{center}
\end{figure}

The following lemma is folklore in the phylogenetics literature; correctness for two trees is straightforward (see \cite{AllenS01,BordewichS04} for related discussions) and extends without effort to three or more trees.

\begin{Lemma}\label{LEM safe subtree reduction}
    Let $\T$ be a set of unrooted binary phylogenetic trees on~$X$ and let $\T_S$ be the result of applying the subtree reduction rule. Then there exists an unrooted agreement forest of $\T_S$ of size at most $\ku$ if and only if $\T$ has an unrooted agreement forest of size at most $k$.
\end{Lemma}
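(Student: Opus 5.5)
The easy direction follows from Observation~\ref{obs:closed}. The set $\T_S$ is exactly $\{T|X' \mid T\in\T\}$ with $X'=(X\setminus S)\cup\{s\}$. So any agreement forest $F$ of $\T$ of size at most $\ku$ restricts to an agreement forest $F'$ of $\T_S$ with $|F'|\le|F|\le\ku$. Hence if $\T$ has an agreement forest of size at most $\ku$, so does $\T_S$.

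For the converse, I take an agreement forest $F'$ of $\T_S$ with $|F'|\le \ku$ and let $B_s\in F'$ be the block containing $s$. I define $F$ by replacing $B_s$ with $B_s\cup S$ and keeping all other blocks; then $F$ is a partition of $X$ with $|F|=|F'|$. Two conditions remain to be checked in each $T\in\T$.

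\emph{Disjointness.} Let $e_T$ be the pendant edge of $T[S]$, with endpoint $v$ outside $T[S]$. Any other block $B\neq B_s$ lies in $X\setminus S$, so $T[B]$ avoids $T[S]$. The subtree $T[B_s\cup S]$ equals $T[(B_s\setminus\{s\})\cup\{s\}]$ with the path to $s$ replaced by all of $T[S]$ (plus $e_T$ when $|B_s|\ge 2$). Contracting $T[S]$ to the leaf $s$ shows that the rest of $T[B_s\cup S]$ is exactly the subtree of $T|X'$ spanning $B_s$. Vertex-disjointness in $T|X'$ then lifts to $T$: vertices of $T$ outside $T[S]$ correspond to vertices or subdivision points of $T|X'$, and disjoint subtrees of $T|X'$ use disjoint edges and vertices. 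I will state this correspondence carefully, since suppressed degree-2 nodes (for example $v$ itself, if $v$ gets suppressed) are where a careless argument could break.

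\emph{Agreement.} I need $T|(B_s\cup S)=T'|(B_s\cup S)$ for all $T,T'\in\T$. If $B_s=\{s\}$ this is immediate from $T[S]=T'[S]$. Otherwise, $T|(B_s\cup S)$ is obtained from $T|B_s$ by replacing the leaf $s$ with the tree $T|S$, attached along the edge corresponding to $e_T$. The replacement attaches at the same internal location of $T[S]$ in every tree; this is exactly the footnoted condition $T[S]=T'[S]$. Therefore an isomorphism $T|B_s\cong T'|B_s$ fixing labels, combined with the identity isomorphism on the common pendant part, gives a label-preserving isomorphism $T|(B_s\cup S)\cong T'|(B_s\cup S)$.

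The main obstacle is stating the graft decomposition precisely: that $T|(B\cup S)$ is obtained from $T|B$ (where $s\in B$) by substituting $T|S$ at $s$, with the attachment point determined by the pendant edge. I would prove this by noting that deleting $e_T$ splits $T[B\cup S]$ into $T[S]$ and $T[B\setminus\{s\}\cup\{v\text{-side}\}]$, and that suppression commutes with this split. The same argument works verbatim for rooted trees, with the root of the pendant subtree playing the role of the attachment vertex.
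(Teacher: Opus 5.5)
Your proposal is correct. It is the standard folklore argument that the paper defers to, since the paper gives no proof of its own and only cites the two-tree arguments of Allen--Steel and Bordewich--Semple: restrict via Observation~\ref{obs:closed} in one direction, and in the other re-insert $S$ into the block containing $s$, using that any label-preserving isomorphism $T[S]\cong T'[S]$ must match the unique degree-2 attachment vertices. The one check you leave implicit is agreement for the untouched blocks $B\neq B_s$, and it is immediate from $T|B=(T|X')|B$ for $B\subseteq X'$.
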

    
    There is another well-known reduction rule called the \emph{chain reduction} rule. It is known that when the input consists of precisely two trees, %
    we can reduce a common chain of length 4 or more to a common 3-chain. When combined with the subtree reduction rule this is sufficient to obtain a kernel of size $15\ku$ \cite{KelkL18}. However, there is no chain reduction rule known for more than two trees. In the next section we extend the chain reduction to multiple trees and then use it to produce a kernel.

\subsection{Chain Reduction for Multiple Trees}
    Let $\T$ be a set of unrooted binary phylogenetic trees on $X$.
    We say that a %
    \leo{sequence of taxa}~$C$ is a \emph{common chain} of $\T$ if $C$ is a chain in each $T\in\T$. The intuition behind the following reduction rule is to
    shorten long common chains without altering the size of an optimal solution. 
 Let $C' = (x_1, \ldots, x_m)$  be a common $m$-chain where \leo{$m > r$} and $\rvalue$. 
 Here,~$k$ is the target parameter, i.e. the value $k$ in the question ``Is there an agreement forest with at most~$k$ blocks?''
 Note that $r \geq 3$, because $t \geq 2$. The \emph{chain reduction rule} reduces every tree $T\in\T$ by removing all but $r$ taxa \leo{from the chain}. More formally: let $X_C = (X\setminus \{x_{r+1}, x_{r+2} \ldots, x_m\})$, then \leo{replace} each $T \in \T$ \leo{by} $T_C=T|X_C$. \stevenrevise{(Note that deleting any $m-r$ taxa from $C'$ has the same effect; there is nothing special about deleting the last $m-r$ taxa).}

\begin{figure}[h]
    \begin{center}
    \begin{tikzpicture}[scale = 0.5]

    \node (F1) at (-3.5,0) {$T$};    
    \vertex [label=above:$a$] (a) at (-2,0) {};
    \vertex [label=below:$b$] (b) at (-1,-1) {};
    \vertex [label=below:$c$] (c) at (0,-1) {};
    \vertex [label=below:$d$] (d) at (1,-1) {};
    \vertex [label=below:$e$] (e) at (2,-1) {};
    \vertex [label=below:$f$] (f) at (3,-1) {};
    \vertex [label=above:$g$] (g) at (4,0) {};
    
    \vertex (l1) at (-1,0) {}; 
    \vertex (l2) at (0,0) {};
    \vertex (l3) at (1,0) {};
    \vertex (l4) at (2,0) {};
    \vertex (l5) at (3,0) {};
    
    \draw
    (l1) edge (l5)
    (l1) edge (a)
    (l1) edge (b)
    (l2) edge (c)
    (l3) edge (d)
    (l4) edge (e)
    (l5) edge (f)
    (l5) edge (g)
    ;
    
    \node (F1) at (-3.5,-5) {$T'$};
    \vertex [label=above:$a$] (a) at (4,-5) {};
    \vertex [label=below:$b$] (b) at (-1,-6) {};
    \vertex [label=below:$c$] (c) at (0,-6) {};
    \vertex [label=below:$d$] (d) at (1,-6) {};
    \vertex [label=below:$e$] (e) at (2,-6) {};
    \vertex [label=below:$f$] (f) at (3,-6) {};
    \vertex [label=above:$g$] (g) at (-2,-5) {};
    
    \vertex (l1) at (-1,-5) {}; 
    \vertex (l2) at (0,-5) {};
    \vertex (l3) at (1,-5) {};
    \vertex (l4) at (2,-5) {};
    \vertex (l5) at (3,-5) {};

    \draw
    (l1) edge (l5)
    (l1) edge (g)
    (l1) edge (b)
    (l2) edge (c)
    (l3) edge (d)
    (l4) edge (e)
    (l5) edge (f)
    (l5) edge (a)
    ;

    \node (F1) at (18.5,0) {$T_C$};    
    \vertex [label=above:$a$] (a) at (13,0) {};
    \vertex [label=below:$b$] (b) at (14,-1) {};
    \vertex [label=below:$c$] (c) at (15,-1) {};
    \vertex [label=below:$d$] (d) at (16,-1) {};
    \vertex [label=above:$g$] (g) at (17,0) {};
    
    \vertex (l1) at (14,0) {}; 
    \vertex (l2) at (15,0) {};
    \vertex (l5) at (16,0) {};

    \draw
    (l1) edge (l5)
    (l1) edge (a)
    (l1) edge (b)
    (l2) edge (c)
    (l5) edge (d)
    (l5) edge (g)
    ;
    
    \node (F1) at (18.5,-5) {$T_C'$};
    \vertex [label=above:$a$] (a) at (17,-5) {};
    \vertex [label=below:$b$] (b) at (14,-6) {};
    \vertex [label=below:$c$] (c) at (15,-6) {};
    \vertex [label=below:$d$] (d) at (16,-6) {};
    \vertex [label=above:$g$] (g) at (13,-5) {};
    
    \vertex (l1) at (14,-5) {}; 
    \vertex (l2) at (15,-5) {};
    \vertex (l5) at (16,-5) {};

    \draw
    (l1) edge (l5)
    (l1) edge (g)
    (l1) edge (b)
    (l2) edge (c)
    (l5) edge (d)
    (l5) edge (a)
    ;
    
    \draw 
    (5, 0) edge [out=0, in=180, looseness=0.8] (7.5, -1.25)
    (5, -5) edge [out=0, in=180, looseness=0.8] (7.5, -3.75)
    
    (7.5,-1.25) edge node [label=below:\small{Chain}] {} (9.5,-1.25)
    (7.5,-3.75) edge node [label=above:\small{Reduction}] {} (9.5,-3.75)
    
    (9.5, -1.25) edge [->, out=0, in=180, looseness=0.8] (11.5, 0)
    (9.5, -3.75) edge [->, out=0, in=180, looseness=0.8] (11.5, -5)
    ;
    
    \end{tikzpicture}
    \caption{Example of applying the chain reduction rule on $\T = \{T, T'\}$ where $C'=(b, c, d, e, f)$ is a long common chain. We reduce it to the smaller chain $C=(b, c, d)$.}
    \label{FIG example chain reduction}
    \end{center}
\end{figure}
When $t=2$ we get the well-known reduction rule which reduces a common chain to a common 3-chain. This is proven in \cite{AllenS01} to be safe. In the next lemma we prove safeness for $t \geq 2$.

\begin{Lemma}\label{LEM safe chain reduction}
    Let $\T$ be a set of~\leorevise{$t$} unrooted binary phylogenetic trees on $X$ %
    and~$C'$ a common $m$-chain of~$\T$ with %
    \leorevise{$m>\rvalue$}.
    Let $\T_C$ be the result of applying the chain reduction rule with respect to~$C'$. Then there exists an unrooted agreement forest of $\T_C$ of size at most $\ku$ if and only if $\T$ has an unrooted agreement forest of size at most $\ku$.
\end{Lemma}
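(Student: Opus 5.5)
One direction is immediate. If $\T$ has an agreement forest of size at most $\ku$, then Observation~\ref{obs:closed} with $X'=X_C$ gives an agreement forest of $\T_C$ of size at most $\ku$. All the work is in the converse. Let $F_C$ be an agreement forest of $\T_C$ with $|F_C|\le \ku$, and let $C=(x_1,\ldots,x_r)$ be the truncated common chain. I will construct an agreement forest $F$ of $\T$ with $|F|\le|F_C|$ by re-inserting $x_{r+1},\ldots,x_m$.

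\textbf{Step 1: find a protected chain taxon.} I first want some block $B\in F_C$ that contains a chain taxon $x_i$ and sits ``well'' relative to the chain in every tree. If $r=t+1$, there are $t+1$ chain taxa and $t$ trees. In each tree $T_C$, consider the pendant edges of the chain and the path through the chain (the backbone). I will argue that each tree can ``break'' the chain in a bad way for at most one block, namely a block that uses the backbone of the chain to connect taxa on both sides while the chain taxa in that block sit differently. A pigeonhole argument over the $t$ trees then leaves a block, or a chain taxon, that is not obstructed in any tree.

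If instead $r=\max\{\ku,3\}\le t$, I use $|F_C|\le\ku\le r$. Either some block contains two or more chain taxa, or every block contains exactly one chain taxon, and in the latter case the chain taxa exhaust all blocks. I then analyse these cases directly.

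\textbf{Step 2: case analysis on how $F_C$ meets $C$.} I plan three cases.

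\begin{enumerate}
\item[(a)] Some block $B$ contains at least two chain taxa $x_i,x_j$. Then in every tree the backbone between $x_i$ and $x_j$ lies in $T_C[B]$. I will show that any other chain taxon that is not a singleton must lie in $B$, or can be moved into $B$ without increasing the size. This gives a forest in which all non-singleton chain taxa lie in $B$. Adding $x_{r+1},\ldots,x_m$ to $B$ then preserves both conditions: the common chain structure makes $T|B'$ identical across the trees, and disjointness holds because the extended chain region was already occupied by $B$.
\item[(b)] Every chain taxon is a singleton. Then I replace $F_C$ by merging all of $C$ into one block. This does not increase the size, since $r\ge 3$ singletons become one block, and it does not break disjointness, since singletons occupy no edges. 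This reduces to case (a).
\item[(c)] Exactly one chain taxon lies in each of several blocks that pass through the backbone. Here the bound on $r$ is used: there are at most $\ku$ blocks, and in each tree at most one block can traverse the backbone, so at most $t$ blocks are traversing in total. Because $r>\min\{\ku-1,t\}$, some chain taxon is in a block that traverses in no tree. Cutting that taxon out, together with the other taxa of its block, and regrouping with the chain is compensated by merging the chain singletons. This reduces to (a) or (b).
\end{enumerate}

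\textbf{Main obstacle.} I expect Step 2(c) to be the crux. The difficulty is the precise counting showing that $r$ chain taxa suffice so that some chain taxon's block does not ``use'' the chain region in any of the $t$ trees. This is where the trade-off between $\ku$ and $t+1$ in $r$ arises. My first attempt will be a charging argument: each tree charges at most one block as the one occupying the chain backbone between consecutive pendant edges. Combined with $|F_C|\le \ku$, this leaves a free chain taxon, which serves as the anchor for absorbing $x_{r+1},\ldots,x_m$.
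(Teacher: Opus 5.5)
\textbf{There is a genuine gap, and it sits exactly where the value of $r$ is needed.} Write $e_1,e_r$ for the two edges joining the chain region of a tree $T\in\T_C$ to the rest of $T$.

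In case (b) you merge all of $C$ into one block and justify disjointness by ``singletons occupy no edges''. The danger is not the old singletons but the new block. $T[C]$ contains the whole chain backbone, and a block $B$ with $B\cap C=\emptyset$ may, in some tree $T$, enter the chain region through $e_1$ and leave through $e_r$ (a \emph{bypass} block). Then $T[B]$ and $T[C]$ intersect, and your merged partition is not an agreement forest. Such a $B$ forces every chain taxon to be a singleton, so it arises precisely in your case (b) and cannot be dismissed.

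The missing idea is to pay for removing bypass blocks. Each tree contains at most one bypass block. Splitting it into its $e_1$-side and $e_r$-side parts keeps an agreement forest, because both parts are restrictions of a block with common topology. So at most $t$ splits are needed. Conversely, a bypass block gives $|F_C|\ge r+1$, hence $r<k$, hence $r=t+1$. Merging the $t+1$ chain singletons then saves exactly $t$ blocks. Your pigeonhole search for an ``unobstructed'' chain taxon cannot replace this: bypass blocks contain no chain taxa, and in the bad configuration every chain taxon is already an unobstructed singleton while the merge is still illegal.

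\textbf{A second gap is in case (a).} Ending with one block $B$ that contains all non-singleton chain taxa does not justify adding $x_{r+1},\dots,x_m$ to $B$. If $|B\cap C|=2$ and $B\not\subseteq C$, then $T[B]$ can leave the chain through $e_1$ in one tree and through $e_r$ in another; all of $C\setminus B$ is then forced to consist of singletons. For instance, take $B=\{x_1,x_2,y,z\}$ with $y,z$ beyond $e_1$ in $T$ and beyond $e_r$ in $T'$. This is a valid block, but after re-inserting the deleted taxa $T$ displays $yx_1|x_2x_{r+1}$ while $T'$ displays $x_1x_2|x_{r+1}y$.

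The block that absorbs $x_{r+1},\dots,x_m$ must contain all of $C$, since it is the $r\ge 3$ chain taxa that fix the orientation. Here that requires splitting $B$ into $B\setminus C$ and $B\cap C$, and merging $B\cap C$ with the at least one singleton in $C\setminus B$. An analogous step is needed when $B\cap C=\{x\}$ and $B$ attaches to the chain differently in different trees. There you split $B$ into $\{x\}$ and the two sibling subtrees of $x$ in $T|B$, paid for by the at least two singletons of $C\setminus B$.

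Your case (c) is too vague to cover these configurations, so the case split is not visibly exhaustive. Organising it by the number of blocks that meet both $C$ and $X_C\setminus C$ makes it exhaustive. There are at most two such blocks, since only $e_1$ and $e_r$ leave the chain region.
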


\begin{proof}
Let $\Fu$ be an unrooted agreement forest for $\T$ of size $k$. It then follows from Observation \ref{obs:closed} (take $X'$ equal to $X_C$) that there exists an unrooted agreement forest of $\T_C$ of size at most $k$. This completes one direction of the proof.

To prove the other direction, suppose there exists an unrooted agreement forest $\Fu_C$ of $\T_C$ of size at most $\ku$. We say that a chain is \emph{preserved} by an unrooted agreement forest if all its elements are in the same block of the unrooted agreement forest. \leo{Let~$C$ be the truncated common chain~$(x_1,\ldots ,x_r)$.}
For convenience we overload $C$ to \leo{also} denote the set of taxa $\{x_1, \ldots, x_r\}$. %
Suppose that $C$ is not preserved in $\Fu_C$. 

We will prove that there exists an unrooted agreement forest $\Fu_C'$ of $\T_C$ of size at most $\ku$ in which $C$ is preserved and contained in block $B_C$. 
From this (and the fact that $r \geq 3$) it will follow that
the taxa $x_{r+1},\ldots ,x_m$ can simply be added to the block $B_C$ to obtain an agreement forest of size at most $k$ for the original set of trees $\T$\footnote{It is well-known that for both rooted and unrooted binary phylogenetic trees common chains of length at least 3 can be extended in this way; informally, three taxa are sufficient to \stevenlater{ensure that the truncated chain is `oriented' the same way in every tree in $\T_C$.}}.

We call a block~$B\in\Fu_C$ an \emph{inside-out} block if $B$ contains at least one taxon of $C$ and at least one taxon not in $C$. We claim that $\Fu_C$ has at most two inside-out blocks. To see this, consider any $T\in\T_C$ and observe that there are at most two edges that are incident to a vertex of $T[X_C]$ and to a vertex not in $T[X_C]$. Let $e_1,e_r$ be these edges (if they exist), where $e_1$ is incident to the parent of $x_1$ and $e_r$ is incident to the parent of $x_r$. For each inside-out block $B$, $T[B]$ contains at least one of $e_1,e_r$. By the definition of an unrooted agreement forest, for any two blocks $B_1, B_2\in\Fu_C$, the subtrees $T[B_1]$ and $T[B_2]$ are vertex-disjoint and hence edge-disjoint. It follows that there can be at most two inside-out blocks. We will split into three cases: two, one or zero inside-out blocks.

The first case we consider is that $\Fu_C$ \textbf{has exactly two} inside-out blocks $B$, $B'$. In this case, we merge the two inside-out blocks and %
\leo{any} blocks that are completely contained within $C$ %
into a single block \stevenrevise{$B_C$}, giving partition $\Fu_C'$ of $X_C$. \stevenrevise{This is depicted schematically in Figure \ref{fig:twoblocks}, where for simplicity we assume $C=(x_1, x_2, x_3, x_4, x_5)$.}

\begin{figure}[h]
    \begin{center}
    \begin{tikzpicture}[scale = 0.5]

    \node (F1) at (-4,0) {$T$};  
    \node (F2) at (0,1) {$B$};
    \node (F3) at (4,1) {$B'$};
    \vertex [label=below:$x_1$] (a) at (0,-1) {};
    \vertex [\MyGreen,label=below:$x_2$] (b) at (1,-1) {};
    \vertex [\MyGreen,label=below:$x_3$] (c) at (2,-1) {};
    \vertex [label=below:$x_4$] (d) at (3,-1) {};
    \vertex [\MyBlue, label=below:$x_5$] (e) at (4,-1) {};
    
    \vertex [\MyGreen] (l1) at (0,0) {}; 
    \vertex [\MyGreen] (l2) at (1,0) {};
    \vertex [\MyGreen] (l3) at (2,0) {};
    \vertex (l4) at (3,0) {};
    \vertex [\MyBlue] (l5) at (4,0) {};
    
    \draw
    (l1) edge[\MyGreen] (l3)
    (l3) edge (l4)
    (l4) edge (l5)
    (l1) edge (a)
    (l2) edge [\MyGreen](b)
    (l3) edge [\MyGreen] (c)
    (l4) edge (d)
    (l5) edge [\MyBlue](e)
    (-2,0) edge node [label=below:$e_1$] {} (l1)
    (-2,0) edge[\MyGreen] (l1)
    (l5) edge node [label=below:$e_5$] {} (6,0)
    (l5) edge [\MyBlue] (6,0)
    ;
    
    \filldraw[fill=black] (-2,0) -- (-3,1) -- (-3,-1) -- (-2,0);
    \filldraw[fill=black] (6,0) -- (7,1) -- (7,-1) -- (6,0);
    
    \begin{scope} [shift={(8,0)}]
        \draw (0,0) edge [->] (2,0);
    \end{scope}
    
    \begin{scope}[shift={(15,0)}]
    \node (F1) at (-4,0) {$T$};  
    \node (F2) at (2,1) {$B_C$};  
    \vertex [\MyRed,label=below:$x_1$] (a) at (0,-1) {};
    \vertex [\MyRed,label=below:$x_2$] (b) at (1,-1) {};
    \vertex [\MyRed,label=below:$x_3$] (c) at (2,-1) {};
    \vertex [\MyRed,label=below:$x_4$] (d) at (3,-1) {};
    \vertex [\MyRed, label=below:$x_5$] (e) at (4,-1) {};
    
    \vertex [\MyRed] (l1) at (0,0) {}; 
    \vertex [\MyRed] (l2) at (1,0) {};
    \vertex [\MyRed] (l3) at (2,0) {};
    \vertex [\MyRed] (l4) at (3,0) {};
    \vertex [\MyRed] (l5) at (4,0) {};
    
    \draw
    (l1) edge [\MyRed] (l5)
    (l1) edge [\MyRed] (a)
    (l2) edge [\MyRed](b)
    (l3) edge [\MyRed] (c)
    (l4) edge [\MyRed] (d)
    (l5) edge [\MyRed](e)
    (-2,0) edge node [label=below:$e_1$] {} (l1)
    (-2,0) edge[\MyRed] (l1)
    (l5) edge node [label=below:$e_5$] {} (6,0)
    (l5) edge [\MyRed] (6,0)
    ;
    
    \filldraw[fill=black] (-2,0) -- (-3,1) -- (-3,-1) -- (-2,0);
    \filldraw[fill=black] (6,0) -- (7,1) -- (7,-1) -- (6,0);
    \end{scope}
    \end{tikzpicture}
    \caption{\stevenrevise{If $F_C$ has two inside-out blocks $B$ (green) and $B'$ (blue) then in every tree $T \in \T_C$, $T[B]$ and $T[B']$ enter the chain in the same way. In this example $T[B]$ will always use $e_1$ but not $e_5$, and $T[B']$ will always use $e_5$ but not $e_1$, in every $T \in \T_C$. This allows $B$, $B'$ and the remainder of the chain $C$ to be merged together into a single block $B_C$ (red).}}
    \label{fig:twoblocks}
    \end{center}
\end{figure}
\vspace{0.4cm}
Clearly the size of $\Fu_C'$ is at most the size of $\Fu_C$. We now argue that $\Fu_C'$ is an unrooted agreement forest of $\T_C$. $\Fu_C'$ satisfies the non-overlapping condition because any block that would overlap with $B_C$ would also overlap with $B$ \stevenlater{or}
$B'$ contradicting that $\Fu_C$ is an unrooted agreement forest.
To see that the topological equality condition is also satisfied, consider the two edges $e_1,e_r$ defined above. Assume without loss of generality that $T[B]$ uses $e_1$ for some $T\in\T_C$. Then $T[B]$ uses $e_1$ and $T[B']$ uses $e_r$ in each tree $T\in\T_C$. From this together with the notion that $C$ has the exact same topology in every tree $T\in \stevenlater{\T_C}$ it follows that for the merged block $B_C$ the equality $T|B_C= T'|B_C$ holds for all $T,T'\in\T_C$.

    The second case is that $\Fu_C$ has \textbf{exactly one inside-out block} $B$. If $|B\cap C|\geq 3$ then observe that either $T[B]$ uses $e_1$ (but not $e_r$) in every tree $T \in \T_C$, $e_r$ (but not $e_1$) in every tree $T \in \T_C$, or both $e_1$ and $e_r$ in every tree $T \in \T_C$. Whichever situation applies, merging $B$ with all other blocks that intersect $C$ yields the desired agreement forest. \stevenrevise{This is depicted in Figure
    \ref{fig:one_insideout_size3ormore}.}

\begin{figure}[h]
    \begin{center}
    \begin{tikzpicture}[scale = 0.5]

    \node (F1) at (-4,0) {$T$};  
    \node (F2) at (0,1) {$B$};
    \vertex [\MyGreen, label=below:$x_1$] (a) at (0,-1) {};
    \vertex [\MyGreen, label=below:$x_2$] (b) at (1,-1) {};
    \vertex [\MyGreen, label=below:$x_3$] (c) at (2,-1) {};
    \vertex [label=below:$x_4$] (d) at (3,-1) {};
    \vertex [label=below:$x_5$] (e) at (4,-1) {};
    
    \vertex [\MyGreen] (l1) at (0,0) {}; 
    \vertex [\MyGreen] (l2) at (1,0) {};
    \vertex [\MyGreen] (l3) at (2,0) {};
    \vertex (l4) at (3,0) {};
    \vertex (l5) at (4,0) {};
    
    \draw
    (l1) edge[\MyGreen] (l3)
    (l3) edge (l4)
    (l4) edge (l5)
    (l1) edge [\MyGreen] (a)
    (l2) edge [\MyGreen] (b)
    (l3) edge [\MyGreen] (c)
    (l4) edge (d)
    (l5) edge (e)
    (-2,0) edge node [label=below:$e_1$] {} (l1)
    (-2,0) edge[\MyGreen] (l1)
    (l5) edge node [label=below:$e_5$] {} (6,0)
    (l5) edge (6,0)
    ;
    
    \filldraw[fill=black] (-2,0) -- (-3,1) -- (-3,-1) -- (-2,0);
    \filldraw[fill=black] (6,0) -- (7,1) -- (7,-1) -- (6,0);
    
    \begin{scope} [shift={(8,0)}]
        \draw (0,0) edge [->] (2,0);
    \end{scope}
    
    \begin{scope}[shift={(15,0)}]
    \node (F1) at (-4,0) {$T$};  
    \node (F2) at (2,1) {$B_C$};  
    \vertex [\MyRed, label=below:$x_1$] (a) at (0,-1) {};
    \vertex [\MyRed, label=below:$x_2$] (b) at (1,-1) {};
    \vertex [\MyRed, label=below:$x_3$] (c) at (2,-1) {};
    \vertex [\MyRed, label=below:$x_4$] (d) at (3,-1) {};
    \vertex [\MyRed, label=below:$x_5$] (e) at (4,-1) {};
    
    \vertex [\MyRed] (l1) at (0,0) {}; 
    \vertex [\MyRed] (l2) at (1,0) {};
    \vertex [\MyRed] (l3) at (2,0) {};
    \vertex [\MyRed] (l4) at (3,0) {};
    \vertex [\MyRed] (l5) at (4,0) {};
    
    \draw
    (l1) edge [\MyRed] (l5)
    (l1) edge [\MyRed] (a)
    (l2) edge [\MyRed] (b)
    (l3) edge [\MyRed] (c)
    (l4) edge [\MyRed] (d)
    (l5) edge [\MyRed] (e)
    (-2,0) edge node [label=below:$e_1$] {} (l1)
    (-2,0) edge [\MyRed] (l1)
    (l5) edge node [label=below:$e_5$] {} (6,0)
    (l5) edge (6,0)
    ;
    
    \filldraw[fill=black] (-2,0) -- (-3,1) -- (-3,-1) -- (-2,0);
    \filldraw[fill=black] (6,0) -- (7,1) -- (7,-1) -- (6,0);
    \end{scope}
    \end{tikzpicture}
    \caption{\stevenrevise{If $F_C$ has exactly one inside-out block $B$ (green) and $|B\cap C|\geq 3$ then $T[B]$ must use the edges $e_1$ and $e_r$ in the same
    way for every $T \in \T_C$. In this example $T[B]$ uses $e_1$ but not $e_5$, so this will hold for all $T \in \T_C$. This allows us to merge
    $B$ with the remainder of $C$ to obtain $B_C$ (red).}}
    \label{fig:one_insideout_size3ormore}
    \end{center}
\end{figure}
\vspace{0.4cm}
If $|B\cap C| = 2$ then we distinguish two subcases. In the first subcase, $T[B]$ always uses $e_1$ (but not $e_r$), always uses $e_r$ (but not $e_1$), or always uses both $e_1$ and $e_r$, ranging across all trees $T \in \T_C$. Here, as in the case $|B \cap C|=3$, merging $B$ with all other blocks that intersect $C$ yields the desired agreement forest. \stevenrevise{This is summarized in Figure \ref{fig:two_intersection_allsame}}. 

\begin{figure}[h]
    \begin{center}
    \begin{tikzpicture}[scale = 0.5]

    \node (F1) at (-4,0) {$T_1$};  
    \node (F2) at (0,1) {$B$};
    \vertex [label=below:$x_1$] (a) at (0,-1) {};
    \vertex [\MyGreen, label=below:$x_2$] (b) at (1,-1) {};
    \vertex [\MyGreen, label=below:$x_3$] (c) at (2,-1) {};
    \vertex [label=below:$x_4$] (d) at (3,-1) {};
    \vertex [label=below:$x_5$] (e) at (4,-1) {};
    
    \vertex [\MyGreen] (l1) at (0,0) {}; 
    \vertex [\MyGreen] (l2) at (1,0) {};
    \vertex [\MyGreen] (l3) at (2,0) {};
    \vertex (l4) at (3,0) {};
    \vertex (l5) at (4,0) {};
    
    \draw
    (l1) edge[\MyGreen] (l3)
    (l3) edge (l4)
    (l4) edge (l5)
    (l1) edge (a)
    (l2) edge [\MyGreen] (b)
    (l3) edge [\MyGreen] (c)
    (l4) edge (d)
    (l5) edge (e)
    (-2,0) edge node [label=below:$e_1$] {} (l1)
    (-2,0) edge[\MyGreen] (l1)
    (l5) edge node [label=below:$e_5$] {} (6,0)
    (l5) edge (6,0)
    ;
    
    \filldraw[fill=black] (-2,0) -- (-3,1) -- (-3,-1) -- (-2,0);
    \filldraw[fill=black] (6,0) -- (7,1) -- (7,-1) -- (6,0);
    
    \begin{scope} [shift={(0,-4)}]
    
    \node (F1) at (-4,0) {$T_2$};  
    \node (F2) at (4,1) {$B$};
    \vertex [label=below:$x_1$] (a) at (0,-1) {};
    \vertex [\MyGreen, label=below:$x_2$] (b) at (1,-1) {};
    \vertex [\MyGreen, label=below:$x_3$] (c) at (2,-1) {};
    \vertex [label=below:$x_4$] (d) at (3,-1) {};
    \vertex [label=below:$x_5$] (e) at (4,-1) {};
    
    \vertex (l1) at (0,0) {}; 
    \vertex [\MyGreen] (l2) at (1,0) {};
    \vertex [\MyGreen] (l3) at (2,0) {};
    \vertex [\MyGreen] (l4) at (3,0) {};
    \vertex [\MyGreen] (l5) at (4,0) {};
    
    \draw
    (l1) edge (l2)
    (l2) edge [\MyGreen] (l4)
    (l4) edge [\MyGreen] (l5)
    (l1) edge (a)
    (l2) edge [\MyGreen] (b)
    (l3) edge [\MyGreen] (c)
    (l4) edge (d)
    (l5) edge (e)
    (-2,0) edge node [label=below:$e_1$] {} (l1)
    (-2,0) edge (l1)
    (l5) edge node [label=below:$e_5$] {} (6,0)
    (l5) edge [\MyGreen] (6,0)
    ;
    
    \filldraw[fill=black] (-2,0) -- (-3,1) -- (-3,-1) -- (-2,0);
    \filldraw[fill=black] (6,0) -- (7,1) -- (7,-1) -- (6,0);
    \end{scope}

    \begin{scope} [shift={(0,-8)}]
    
    \node (F1) at (-4,0) {$T_3$};  
    \node (F2) at (2,1) {$B$};
    \vertex [label=below:$x_1$] (a) at (0,-1) {};
    \vertex [\MyGreen, label=below:$x_2$] (b) at (1,-1) {};
    \vertex [\MyGreen, label=below:$x_3$] (c) at (2,-1) {};
    \vertex [label=below:$x_4$] (d) at (3,-1) {};
    \vertex [label=below:$x_5$] (e) at (4,-1) {};
    
    \vertex [\MyGreen] (l1) at (0,0) {}; 
    \vertex [\MyGreen] (l2) at (1,0) {};
    \vertex [\MyGreen] (l3) at (2,0) {};
    \vertex [\MyGreen] (l4) at (3,0) {};
    \vertex [\MyGreen] (l5) at (4,0) {};
    
    \draw
    (l1) edge [\MyGreen] (l3)
    (l3) edge [\MyGreen] (l4)
    (l4) edge [\MyGreen] (l5)
    (l1) edge (a)
    (l2) edge [\MyGreen] (b)
    (l3) edge [\MyGreen] (c)
    (l4) edge (d)
    (l5) edge (e)
    (-2,0) edge node [label=below:$e_1$] {} (l1)
    (-2,0) edge[\MyGreen] (l1)
    (l5) edge node [label=below:$e_5$] {} (6,0)
    (l5) edge [\MyGreen] (6,0)
    ;
    
    \filldraw[fill=black] (-2,0) -- (-3,1) -- (-3,-1) -- (-2,0);
    \filldraw[fill=black] (6,0) -- (7,1) -- (7,-1) -- (6,0);
    \end{scope}
    
    \begin{scope} [shift={(8,0)}]
        \draw (0,0) edge [->] node  {} (2,0);
    \end{scope}

    \begin{scope} [shift={(8,-4)}]
        \draw (0,0) edge [->] node {} (2,0);
    \end{scope}

    \begin{scope} [shift={(8,-8)}]
        \draw (0,0) edge [->] node {} (2,0);
    \end{scope}
    
    \begin{scope}[shift={(15,0)}]
    \node (F1) at (-4,0) {$T_1$};  
    \node (F2) at (2,1) {$B_C$};  
    \vertex [\MyRed, label=below:$x_1$] (a) at (0,-1) {};
    \vertex [\MyRed, label=below:$x_2$] (b) at (1,-1) {};
    \vertex [\MyRed, label=below:$x_3$] (c) at (2,-1) {};
    \vertex [\MyRed, label=below:$x_4$] (d) at (3,-1) {};
    \vertex [\MyRed, label=below:$x_5$] (e) at (4,-1) {};
    
    \vertex [\MyRed] (l1) at (0,0) {}; 
    \vertex [\MyRed] (l2) at (1,0) {};
    \vertex [\MyRed] (l3) at (2,0) {};
    \vertex [\MyRed] (l4) at (3,0) {};
    \vertex [\MyRed] (l5) at (4,0) {};
    
    \draw
    (l1) edge [\MyRed] (l5)
    (l1) edge [\MyRed] (a)
    (l2) edge [\MyRed] (b)
    (l3) edge [\MyRed] (c)
    (l4) edge [\MyRed] (d)
    (l5) edge [\MyRed] (e)
    (-2,0) edge node [label=below:$e_1$] {} (l1)
    (-2,0) edge [\MyRed] (l1)
    (l5) edge node [label=below:$e_5$] {} (6,0)
    (l5) edge (6,0)
    ;
    
    \filldraw[fill=black] (-2,0) -- (-3,1) -- (-3,-1) -- (-2,0);
    \filldraw[fill=black] (6,0) -- (7,1) -- (7,-1) -- (6,0);
    \end{scope}

    \begin{scope}[shift={(15,-4)}]
    \node (F1) at (-4,0) {$T_2$};  
    \node (F2) at (2,1) {$B_C$};  
    \vertex [\MyRed, label=below:$x_1$] (a) at (0,-1) {};
    \vertex [\MyRed, label=below:$x_2$] (b) at (1,-1) {};
    \vertex [\MyRed, label=below:$x_3$] (c) at (2,-1) {};
    \vertex [\MyRed, label=below:$x_4$] (d) at (3,-1) {};
    \vertex [\MyRed, label=below:$x_5$] (e) at (4,-1) {};
    
    \vertex [\MyRed] (l1) at (0,0) {}; 
    \vertex [\MyRed] (l2) at (1,0) {};
    \vertex [\MyRed] (l3) at (2,0) {};
    \vertex [\MyRed] (l4) at (3,0) {};
    \vertex [\MyRed] (l5) at (4,0) {};
    
    \draw
    (l1) edge [\MyRed] (l5)
    (l1) edge [\MyRed] (a)
    (l2) edge [\MyRed] (b)
    (l3) edge [\MyRed] (c)
    (l4) edge [\MyRed] (d)
    (l5) edge [\MyRed] (e)
    (-2,0) edge node [label=below:$e_1$] {} (l1)
    (-2,0) edge (l1)
    (l5) edge node [label=below:$e_5$] {} (6,0)
    (l5) edge [\MyRed](6,0)
    ;
    
    \filldraw[fill=black] (-2,0) -- (-3,1) -- (-3,-1) -- (-2,0);
    \filldraw[fill=black] (6,0) -- (7,1) -- (7,-1) -- (6,0);
    \end{scope}

    \begin{scope}[shift={(15,-8)}]
    \node (F1) at (-4,0) {$T_3$};  
    \node (F2) at (2,1) {$B_C$};  
    \vertex [\MyRed, label=below:$x_1$] (a) at (0,-1) {};
    \vertex [\MyRed, label=below:$x_2$] (b) at (1,-1) {};
    \vertex [\MyRed, label=below:$x_3$] (c) at (2,-1) {};
    \vertex [\MyRed, label=below:$x_4$] (d) at (3,-1) {};
    \vertex [\MyRed, label=below:$x_5$] (e) at (4,-1) {};
    
    \vertex [\MyRed] (l1) at (0,0) {}; 
    \vertex [\MyRed] (l2) at (1,0) {};
    \vertex [\MyRed] (l3) at (2,0) {};
    \vertex [\MyRed] (l4) at (3,0) {};
    \vertex [\MyRed] (l5) at (4,0) {};
    
    \draw
    (l1) edge [\MyRed] (l5)
    (l1) edge [\MyRed] (a)
    (l2) edge [\MyRed] (b)
    (l3) edge [\MyRed] (c)
    (l4) edge [\MyRed] (d)
    (l5) edge [\MyRed] (e)
    (-2,0) edge node [label=below:$e_1$] {} (l1)
    (-2,0) edge [\MyRed](l1)
    (l5) edge node [label=below:$e_5$] {} (6,0)
    (l5) edge [\MyRed](6,0)
    ;
    
    \filldraw[fill=black] (-2,0) -- (-3,1) -- (-3,-1) -- (-2,0);
    \filldraw[fill=black] (6,0) -- (7,1) -- (7,-1) -- (6,0);
    \end{scope}
    \end{tikzpicture}
    \caption{\stevenrevise{If $F_C$ has exactly one inside-out block $B$ (green) and $|B\cap C| = 2$, and $T[B]$ uses the edges $e_1$ and $e_r$ in the same way for every $T \in \T_C$ - so in every tree it behaves like in $T_1$, or in every tree it behaves like in $T_2$, or in every tree it behaves like in $T_3$ - then we can merge
    $B$ with the remainder of $C$ to obtain $B_C$ (red).}}
    \label{fig:two_intersection_allsame}
    \end{center}
\end{figure}
\vspace{0.4cm}
In the second subcase there are distinct trees $T, T' \in \T_C$ where $T[B]$ uses $e_1$ but not $e_r$, while $T'[B]$ uses $e_r$ but not $e_1$\footnote{If $|B \cap C|=2$ and some tree $T$ has the property that $T[B]$ uses both $e_1$ and $e_r$, then all trees $T \in \T_C$ have this property.}.
Combined with the fact that $r \geq 3$, this means that $|C \setminus B| \geq 1$ and all taxa in $C \setminus B$ are singleton blocks in the agreement forest. We split $B$ into $B \setminus C$ and $B \cap C$ and then merge $B \cap C$ with the singleton blocks in $C \setminus B$. This does not increase the size of the agreement forest, so we are done with the $|B \cap C|=2$ case. \stevenrevise{This is depicted in Figure \ref{fig:two_intersection_different}}.

\begin{figure}[h]
    \begin{center}
    \begin{tikzpicture}[scale = 0.5]

    \node (F1) at (-4,0) {$T_1$};  
    \node (F2) at (0,1) {$B$};
    \vertex [\MyOrange, label=below:$x_1$] (a) at (0,-1) {};
    \vertex [\MyGreen, label=below:$x_2$] (b) at (1,-1) {};
    \vertex [\MyGreen, label=below:$x_3$] (c) at (2,-1) {};
    \vertex [label=below:$x_4$] (d) at (3,-1) {};
    \vertex [label=below:$x_5$] (e) at (4,-1) {};
    
    \vertex [\MyGreen] (l1) at (0,0) {}; 
    \vertex [\MyGreen] (l2) at (1,0) {};
    \vertex [\MyGreen] (l3) at (2,0) {};
    \vertex (l4) at (3,0) {};
    \vertex (l5) at (4,0) {};
    
    \draw
    (l1) edge[\MyGreen] (l3)
    (l3) edge (l4)
    (l4) edge (l5)
    (l1) edge (a)
    (l2) edge [\MyGreen] (b)
    (l3) edge [\MyGreen] (c)
    (l4) edge (d)
    (l5) edge (e)
    (-2,0) edge node [label=below:$e_1$] {} (l1)
    (-2,0) edge[\MyGreen] (l1)
    (l5) edge node [label=below:$e_5$] {} (6,0)
    (l5) edge (6,0)
    ;
    
    \filldraw[fill=black] (-2,0) -- (-3,1) -- (-3,-1) -- (-2,0);
    \filldraw[fill=black] (6,0) -- (7,1) -- (7,-1) -- (6,0);
    
    \begin{scope} [shift={(0,-4)}]
    
    \node (F1) at (-4,0) {$T_2$};  
    \node (F2) at (4,1) {$B$};
    \vertex [label=below:$x_1$] (a) at (0,-1) {};
    \vertex [\MyGreen, label=below:$x_2$] (b) at (1,-1) {};
    \vertex [\MyGreen, label=below:$x_3$] (c) at (2,-1) {};
    \vertex [\MyOrange, label=below:$x_4$] (d) at (3,-1) {};
    \vertex [\MyOrange, label=below:$x_5$] (e) at (4,-1) {};
    
    \vertex (l1) at (0,0) {}; 
    \vertex [\MyGreen] (l2) at (1,0) {};
    \vertex [\MyGreen] (l3) at (2,0) {};
    \vertex [\MyGreen] (l4) at (3,0) {};
    \vertex [\MyGreen] (l5) at (4,0) {};
    
    \draw
    (l1) edge (l2)
    (l2) edge [\MyGreen] (l4)
    (l4) edge [\MyGreen] (l5)
    (l1) edge (a)
    (l2) edge [\MyGreen] (b)
    (l3) edge [\MyGreen] (c)
    (l4) edge (d)
    (l5) edge (e)
    (-2,0) edge node [label=below:$e_1$] {} (l1)
    (-2,0) edge (l1)
    (l5) edge node [label=below:$e_5$] {} (6,0)
    (l5) edge [\MyGreen] (6,0)
    ;
    
    \filldraw[fill=black] (-2,0) -- (-3,1) -- (-3,-1) -- (-2,0);
    \filldraw[fill=black] (6,0) -- (7,1) -- (7,-1) -- (6,0);
    \end{scope}

    \begin{scope} [shift={(8,0)}]
        \draw (0,0) edge [->] node  {} (2,0);
    \end{scope}

    \begin{scope} [shift={(8,-4)}]
        \draw (0,0) edge [->] node  {} (2,0);
    \end{scope}

    \begin{scope}[shift={(15,0)}]
    \node (F1) at (-4,0) {$T_1$};  
    \node (F2) at (2,1) {$B_C$};  
    \vertex [\MyRed, label=below:$x_1$] (a) at (0,-1) {};
    \vertex [\MyRed, label=below:$x_2$] (b) at (1,-1) {};
    \vertex [\MyRed, label=below:$x_3$] (c) at (2,-1) {};
    \vertex [\MyRed, label=below:$x_4$] (d) at (3,-1) {};
    \vertex [\MyRed, label=below:$x_5$] (e) at (4,-1) {};
    
    \vertex [\MyRed] (l1) at (0,0) {}; 
    \vertex [\MyRed] (l2) at (1,0) {};
    \vertex [\MyRed] (l3) at (2,0) {};
    \vertex [\MyRed] (l4) at (3,0) {};
    \vertex [\MyRed] (l5) at (4,0) {};
    
    \draw
    (l1) edge [\MyRed] (l5)
    (l1) edge [\MyRed] (a)
    (l2) edge [\MyRed] (b)
    (l3) edge [\MyRed] (c)
    (l4) edge [\MyRed] (d)
    (l5) edge [\MyRed] (e)
    (-2,0) edge node [label=below:$e_1$] {} (l1)
    (-2,0) edge (l1)
    (l5) edge node [label=below:$e_5$] {} (6,0)
    (l5) edge (6,0)
    ;
    
    \filldraw[fill=black] (-2,0) -- (-3,1) -- (-3,-1) -- (-2,0);
    \filldraw[fill=black] (6,0) -- (7,1) -- (7,-1) -- (6,0);
    \end{scope}

    \begin{scope}[shift={(15,-4)}]
    \node (F1) at (-4,0) {$T_2$};  
    \node (F2) at (2,1) {$B_C$};  
    \vertex [\MyRed, label=below:$x_1$] (a) at (0,-1) {};
    \vertex [\MyRed, label=below:$x_2$] (b) at (1,-1) {};
    \vertex [\MyRed, label=below:$x_3$] (c) at (2,-1) {};
    \vertex [\MyRed, label=below:$x_4$] (d) at (3,-1) {};
    \vertex [\MyRed, label=below:$x_5$] (e) at (4,-1) {};
    
    \vertex [\MyRed] (l1) at (0,0) {}; 
    \vertex [\MyRed] (l2) at (1,0) {};
    \vertex [\MyRed] (l3) at (2,0) {};
    \vertex [\MyRed] (l4) at (3,0) {};
    \vertex [\MyRed] (l5) at (4,0) {};
    
    \draw
    (l1) edge [\MyRed] (l5)
    (l1) edge [\MyRed] (a)
    (l2) edge [\MyRed] (b)
    (l3) edge [\MyRed] (c)
    (l4) edge [\MyRed] (d)
    (l5) edge [\MyRed] (e)
    (-2,0) edge node [label=below:$e_1$] {} (l1)
    (-2,0) edge (l1)
    (l5) edge node [label=below:$e_5$] {} (6,0)
    (l5) edge (6,0)
    ;
    
    \filldraw[fill=black] (-2,0) -- (-3,1) -- (-3,-1) -- (-2,0);
    \filldraw[fill=black] (6,0) -- (7,1) -- (7,-1) -- (6,0);
    \end{scope}
    \end{tikzpicture}
    \caption{\stevenrevise{If $F_C$ has exactly one inside-out block $B$ (green) and $|B\cap C| = 2$, and $T[B]$ uses the edges $e_1$ and $e_r$ in different ways - so in at least one tree it behaves like in $T_1$, and in at least one tree it behaves like in $T_2$ - then all the taxa in $C \setminus (B \cap C)$ are necessarily singletons in $F_C$ (shown in orange). In this case we can split $B \cap C$ off from $B$ and merge it with these singletons to obtain $B_C$ (red).}}
    \label{fig:two_intersection_different}
    \end{center}
\end{figure}
\vspace{0.4cm}

    If $|B \cap C|=1$ we proceed as follows. Consider an arbitrary tree $T\in\T_C$. Then $T[B]$ uses one or both of the edges $e_1,e_r$, and this can vary for different $T \in \T_C$. 
    Observe that due to $r \geq 3$ and $|B \cap C|=1$ there is at least one block in the agreement forest distinct from $B$ that intersects $C$.

    If $T[B]$ uses $e_1$ (but not $e_r$) for all $T \in \T_C$, or $T[B]$ uses $e_r$ (but not $e_1$) for all $T \in \T_C$, we can simply merge $B$ with all other blocks that intersect $C$ and we are done; \stevenrevise{see Figure \ref{fig:one_intersection_Same}}.

\begin{figure}[h]
    \begin{center}
    \begin{tikzpicture}[scale = 0.5]

    \node (F1) at (-4,0) {$T_1$};  
    \node (F2) at (0,1) {$B$};
    \vertex [label=below:$x_1$] (a) at (0,-1) {};
    \vertex [label=below:$x_2$] (b) at (1,-1) {};
    \vertex [\MyGreen, label=below:$x_3$] (c) at (2,-1) {};
    \vertex [label=below:$x_4$] (d) at (3,-1) {};
    \vertex [label=below:$x_5$] (e) at (4,-1) {};
    
    \vertex [\MyGreen] (l1) at (0,0) {}; 
    \vertex [\MyGreen] (l2) at (1,0) {};
    \vertex [\MyGreen] (l3) at (2,0) {};
    \vertex (l4) at (3,0) {};
    \vertex (l5) at (4,0) {};
    
    \draw
    (l1) edge[\MyGreen] (l3)
    (l3) edge (l4)
    (l4) edge (l5)
    (l1) edge (a)
    (l2) edge (b)
    (l3) edge [\MyGreen] (c)
    (l4) edge (d)
    (l5) edge (e)
    (-2,0) edge node [label=below:$e_1$] {} (l1)
    (-2,0) edge[\MyGreen] (l1)
    (l5) edge node [label=below:$e_5$] {} (6,0)
    (l5) edge (6,0)
    ;
    
    \filldraw[fill=black] (-2,0) -- (-3,1) -- (-3,-1) -- (-2,0);
    \filldraw[fill=black] (6,0) -- (7,1) -- (7,-1) -- (6,0);
    
    \begin{scope} [shift={(0,-4)}]
    
    \node (F1) at (-4,0) {$T_2$};  
    \node (F2) at (4,1) {$B$};
    \vertex [label=below:$x_1$] (a) at (0,-1) {};
    \vertex [label=below:$x_2$] (b) at (1,-1) {};
    \vertex [\MyGreen, label=below:$x_3$] (c) at (2,-1) {};
    \vertex [label=below:$x_4$] (d) at (3,-1) {};
    \vertex [label=below:$x_5$] (e) at (4,-1) {};
    
    \vertex (l1) at (0,0) {}; 
    \vertex (l2) at (1,0) {};
    \vertex [\MyGreen] (l3) at (2,0) {};
    \vertex [\MyGreen] (l4) at (3,0) {};
    \vertex [\MyGreen] (l5) at (4,0) {};
    
    \draw
    (l1) edge (l2)
    (l2) edge (l3)
    (l3) edge [\MyGreen] (l4)
    (l4) edge [\MyGreen] (l5)
    (l1) edge (a)
    (l2) edge (b)
    (l3) edge [\MyGreen] (c)
    (l4) edge (d)
    (l5) edge (e)
    (-2,0) edge node [label=below:$e_1$] {} (l1)
    (-2,0) edge (l1)
    (l5) edge node [label=below:$e_5$] {} (6,0)
    (l5) edge [\MyGreen](6,0)
    ;

    \filldraw[fill=black] (-2,0) -- (-3,1) -- (-3,-1) -- (-2,0);
    \filldraw[fill=black] (6,0) -- (7,1) -- (7,-1) -- (6,0);
    \end{scope}
    
    \begin{scope} [shift={(8,0)}]
        \draw (0,0) edge [->] node  {} (2,0);
    \end{scope}

    \begin{scope} [shift={(8,-4)}]
        \draw (0,0) edge [->] node  {} (2,0);
    \end{scope}
    
    \begin{scope}[shift={(15,0)}]
    \node (F1) at (-4,0) {$T_1$};  
    \node (F2) at (2,1) {$B_C$};  
    \vertex [\MyRed, label=below:$x_1$] (a) at (0,-1) {};
    \vertex [\MyRed, label=below:$x_2$] (b) at (1,-1) {};
    \vertex [\MyRed, label=below:$x_3$] (c) at (2,-1) {};
    \vertex [\MyRed, label=below:$x_4$] (d) at (3,-1) {};
    \vertex [\MyRed, label=below:$x_5$] (e) at (4,-1) {};
    
    \vertex [\MyRed] (l1) at (0,0) {}; 
    \vertex [\MyRed] (l2) at (1,0) {};
    \vertex [\MyRed] (l3) at (2,0) {};
    \vertex [\MyRed] (l4) at (3,0) {};
    \vertex [\MyRed] (l5) at (4,0) {};
    
    \draw
    (l1) edge [\MyRed] (l5)
    (l1) edge [\MyRed] (a)
    (l2) edge [\MyRed] (b)
    (l3) edge [\MyRed] (c)
    (l4) edge [\MyRed] (d)
    (l5) edge [\MyRed] (e)
    (-2,0) edge node [label=below:$e_1$] {} (l1)
    (-2,0) edge [\MyRed] (l1)
    (l5) edge node [label=below:$e_5$] {} (6,0)
    (l5) edge (6,0)
    ;
    
    \filldraw[fill=black] (-2,0) -- (-3,1) -- (-3,-1) -- (-2,0);
    \filldraw[fill=black] (6,0) -- (7,1) -- (7,-1) -- (6,0);
    \end{scope}

    \begin{scope}[shift={(15,-4)}]
    \node (F1) at (-4,0) {$T_2$};  
    \node (F2) at (2,1) {$B_C$};  
    \vertex [\MyRed, label=below:$x_1$] (a) at (0,-1) {};
    \vertex [\MyRed, label=below:$x_2$] (b) at (1,-1) {};
    \vertex [\MyRed, label=below:$x_3$] (c) at (2,-1) {};
    \vertex [\MyRed, label=below:$x_4$] (d) at (3,-1) {};
    \vertex [\MyRed, label=below:$x_5$] (e) at (4,-1) {};
    
    \vertex [\MyRed] (l1) at (0,0) {}; 
    \vertex [\MyRed] (l2) at (1,0) {};
    \vertex [\MyRed] (l3) at (2,0) {};
    \vertex [\MyRed] (l4) at (3,0) {};
    \vertex [\MyRed] (l5) at (4,0) {};
    
    \draw
    (l1) edge [\MyRed] (l5)
    (l1) edge [\MyRed] (a)
    (l2) edge [\MyRed] (b)
    (l3) edge [\MyRed] (c)
    (l4) edge [\MyRed] (d)
    (l5) edge [\MyRed] (e)
    (-2,0) edge node [label=below:$e_1$] {} (l1)
    (-2,0) edge (l1)
    (l5) edge node [label=below:$e_5$] {} (6,0)
    (l5) edge [\MyRed](6,0)
    ;
    
    \filldraw[fill=black] (-2,0) -- (-3,1) -- (-3,-1) -- (-2,0);
    \filldraw[fill=black] (6,0) -- (7,1) -- (7,-1) -- (6,0);
    \end{scope}

    \end{tikzpicture}
    \caption{\stevenrevise{If $F_C$ has exactly one inside-out block $B$ (green) and $|B\cap C| = 1$, and $T[B]$ always behaves like in $T_1$, or always behaves like in $T_2$, then we can merge $B$ with the remainder of $C$ to obtain $B_C$ (red).}}
    \label{fig:one_intersection_Same}
    \end{center}
\end{figure}
\vspace{0.4cm}

    If there exist distinct trees $T, T' \in \T_C$ such that $T[B]$ uses $e_1$ (but not $e_r$) and $T'[B]$ uses $e_r$ (but not $e_1$),   
    then the at least two taxa in $C \setminus B$ are both singleton taxa. Now, select an arbitrary $T \in \T_C$ and let $x$ be the single taxon in $|B \cap C|$. Let $L$ and $R$ be the taxa in the two sibling subtrees of $x$ in $T|B$. (If $|B|=2$ then only one of $L$ and $R$ will be non-empty, say $L$, otherwise they will both be non-empty). \stevenrevise{Figure \ref{fig:siblings} clarifies this concept.}

\begin{figure}[h]
    \begin{center}
    \begin{tikzpicture}[scale = 0.5]

    \node (F1) at (-5,0) {$T|B$};
    \node (F1) at (-2,1.5) {$L$};
    \node (F1) at (6,1.5) {$R$};
    \vertex [\MyGreen, label=below:$x$] (c) at (2,-2) {};
    
    \vertex [\MyGreen] (l3) at (2,0) {};
    
    \draw
    (l3) edge node  {} (c)
    (l3) edge [\MyGreen] (c)
    (-2,0) edge node {} (l3)
    (-2,0) edge[\MyGreen] (l3)
    (l3) edge node  {} (6,0)
    (l3) edge[\MyGreen] (6,0)
    ;
    
    \filldraw[fill=\MyGreen] (-2,0) -- (-3,1) -- (-3,-1) -- (-2,0);
    \filldraw[fill=\MyGreen] (6,0) -- (7,1) -- (7,-1) -- (6,0);
    \end{tikzpicture}
    \caption{\stevenrevise{The definition of the sets $L$ and $R$ as used in Figure \ref{fig:one_intersection_two_of_three.} (where $x = x_3$). Here $T$ can be any tree in $\T_C$.}}
        \label{fig:siblings}
    \end{center}
\end{figure}

    Now, we split $B$ into $\{x\}$, $L$ and (if it exists) $R$ but compensate for this by merging $\{x\}$ with the at least two singleton blocks corresponding to taxa in $C \setminus B$. This does not increase the size of the agreement forest overall, so we are done. \stevenrevise{See Figure \ref{fig:one_intersection_two_of_three.}}.

Suppose then that there exists a tree $T \in \T_C$ such that $T[B]$ uses both $e_1$ and $e_r$. The same split-and-merge tactic with $L$ and $R$ as used in the previous paragraph also works here. This concludes the case that $|B \cap C|=1$. \stevenrevise{This is also depicted in Figure \ref{fig:one_intersection_two_of_three.}}.

\begin{figure}[h]
    \begin{center}
    \begin{tikzpicture}[scale = 0.5]

    \node (F1) at (-4,0) {$T_1$};  
    \node (F2) at (0,1) {$B$};
    \vertex [\MyOrange, label=below:$x_1$] (a) at (0,-1) {};
    \vertex [\MyOrange, label=below:$x_2$] (b) at (1,-1) {};
    \vertex [\MyGreen, label=below:$x_3$] (c) at (2,-1) {};
    \vertex [label=below:$x_4$] (d) at (3,-1) {};
    \vertex [label=below:$x_5$] (e) at (4,-1) {};
    
    \vertex [\MyGreen] (l1) at (0,0) {}; 
    \vertex [\MyGreen] (l2) at (1,0) {};
    \vertex [\MyGreen] (l3) at (2,0) {};
    \vertex (l4) at (3,0) {};
    \vertex (l5) at (4,0) {};
    
    \draw
    (l1) edge[\MyGreen] (l3)
    (l3) edge (l4)
    (l4) edge (l5)
    (l1) edge (a)
    (l2) edge (b)
    (l3) edge [\MyGreen] (c)
    (l4) edge (d)
    (l5) edge (e)
    (-2,0) edge node [label=below:$e_1$] {} (l1)
    (-2,0) edge[\MyGreen] (l1)
    (l5) edge node [label=below:$e_5$] {} (6,0)
    (l5) edge (6,0)
    ;
    
    \filldraw[fill=black] (-2,0) -- (-3,1) -- (-3,-1) -- (-2,0);
    \filldraw[fill=black] (6,0) -- (7,1) -- (7,-1) -- (6,0);
    
    \begin{scope} [shift={(0,-4)}]
    
    \node (F1) at (-4,0) {$T_2$};  
    \node (F2) at (4,1) {$B$};
    \vertex [label=below:$x_1$] (a) at (0,-1) {};
    \vertex [label=below:$x_2$] (b) at (1,-1) {};
    \vertex [\MyGreen, label=below:$x_3$] (c) at (2,-1) {};
    \vertex [\MyOrange, label=below:$x_4$] (d) at (3,-1) {};
    \vertex [\MyOrange, label=below:$x_5$] (e) at (4,-1) {};
    
    \vertex (l1) at (0,0) {}; 
    \vertex (l2) at (1,0) {};
    \vertex [\MyGreen] (l3) at (2,0) {};
    \vertex [\MyGreen] (l4) at (3,0) {};
    \vertex [\MyGreen] (l5) at (4,0) {};
    
    \draw
    (l1) edge (l2)
    (l2) edge (l3)
    (l3) edge [\MyGreen] (l4)
    (l4) edge [\MyGreen] (l5)
    (l1) edge (a)
    (l2) edge (b)
    (l3) edge [\MyGreen] (c)
    (l4) edge (d)
    (l5) edge (e)
    (-2,0) edge node [label=below:$e_1$] {} (l1)
    (-2,0) edge (l1)
    (l5) edge node [label=below:$e_5$] {} (6,0)
    (l5) edge [\MyGreen] (6,0)
    ;
    
    \filldraw[fill=black] (-2,0) -- (-3,1) -- (-3,-1) -- (-2,0);
    \filldraw[fill=black] (6,0) -- (7,1) -- (7,-1) -- (6,0);
    \end{scope}

    \begin{scope} [shift={(0,-8)}]
    
    \node (F1) at (-4,0) {$T_3$};  
    \node (F2) at (2,1) {$B$};
    \vertex [\MyOrange, label=below:$x_1$] (a) at (0,-1) {};
    \vertex [\MyOrange, label=below:$x_2$] (b) at (1,-1) {};
    \vertex [\MyGreen, label=below:$x_3$] (c) at (2,-1) {};
    \vertex [\MyOrange, label=below:$x_4$] (d) at (3,-1) {};
    \vertex [\MyOrange, label=below:$x_5$] (e) at (4,-1) {};
    
    \vertex [\MyGreen] (l1) at (0,0) {}; 
    \vertex [\MyGreen] (l2) at (1,0) {};
    \vertex [\MyGreen] (l3) at (2,0) {};
    \vertex [\MyGreen] (l4) at (3,0) {};
    \vertex [\MyGreen] (l5) at (4,0) {};
    
    \draw
    (l1) edge [\MyGreen] (l2)
    (l2) edge [\MyGreen] (l4)
    (l4) edge [\MyGreen] (l5)
    (l1) edge (a)
    (l2) edge (b)
    (l3) edge [\MyGreen] (c)
    (l4) edge (d)
    (l5) edge (e)
    (-2,0) edge node [label=below:$e_1$] {} (l1)
    (-2,0) edge [\MyGreen] (l1)
    (l5) edge node [label=below:$e_5$] {} (6,0)
    (l5) edge [\MyGreen] (6,0)
    ;
    
    \filldraw[fill=black] (-2,0) -- (-3,1) -- (-3,-1) -- (-2,0);
    \filldraw[fill=black] (6,0) -- (7,1) -- (7,-1) -- (6,0);
    \end{scope}
    
    \begin{scope} [shift={(8,-4)}]
        \draw (0,0) edge [->] (2,0);
    \end{scope}
    
    \begin{scope}[shift={(15,0)}]
    \node (F1) at (-4,0) {$T_1$};  
    \node (F2) at (2,1) {$B_C$};  
    \vertex [\MyRed, label=below:$x_1$] (a) at (0,-1) {};
    \vertex [\MyRed, label=below:$x_2$] (b) at (1,-1) {};
    \vertex [\MyRed, label=below:$x_3$] (c) at (2,-1) {};
    \vertex [\MyRed, label=below:$x_4$] (d) at (3,-1) {};
    \vertex [\MyRed, label=below:$x_5$] (e) at (4,-1) {};
    
    \vertex [\MyRed] (l1) at (0,0) {}; 
    \vertex [\MyRed] (l2) at (1,0) {};
    \vertex [\MyRed] (l3) at (2,0) {};
    \vertex [\MyRed] (l4) at (3,0) {};
    \vertex [\MyRed] (l5) at (4,0) {};
    
    \draw
    (l1) edge [\MyRed] (l5)
    (l1) edge [\MyRed] (a)
    (l2) edge [\MyRed] (b)
    (l3) edge [\MyRed] (c)
    (l4) edge [\MyRed] (d)
    (l5) edge [\MyRed] (e)
    (-2,0) edge node [label=below:$e_1$] {} (l1)
    (-2,0) edge (l1)
    (l5) edge node [label=below:$e_5$] {} (6,0)
    (l5) edge (6,0)
    ;
    
    \filldraw[fill=black] (-2,0) -- (-3,1) -- (-3,-1) -- (-2,0);
    \filldraw[fill=black] (6,0) -- (7,1) -- (7,-1) -- (6,0);
    \end{scope}

    \begin{scope}[shift={(15,-4)}]
    \node (F1) at (-4,0) {$T_2$};  
    \node (F2) at (2,1) {$B_C$};  
    \vertex [\MyRed, label=below:$x_1$] (a) at (0,-1) {};
    \vertex [\MyRed, label=below:$x_2$] (b) at (1,-1) {};
    \vertex [\MyRed, label=below:$x_3$] (c) at (2,-1) {};
    \vertex [\MyRed, label=below:$x_4$] (d) at (3,-1) {};
    \vertex [\MyRed, label=below:$x_5$] (e) at (4,-1) {};
    
    \vertex [\MyRed] (l1) at (0,0) {}; 
    \vertex [\MyRed] (l2) at (1,0) {};
    \vertex [\MyRed] (l3) at (2,0) {};
    \vertex [\MyRed] (l4) at (3,0) {};
    \vertex [\MyRed] (l5) at (4,0) {};
    
    \draw
    (l1) edge [\MyRed] (l5)
    (l1) edge [\MyRed] (a)
    (l2) edge [\MyRed] (b)
    (l3) edge [\MyRed] (c)
    (l4) edge [\MyRed] (d)
    (l5) edge [\MyRed] (e)
    (-2,0) edge node [label=below:$e_1$] {} (l1)
    (-2,0) edge (l1)
    (l5) edge node [label=below:$e_5$] {} (6,0)
    (l5) edge (6,0)
    ;
    
    \filldraw[fill=black] (-2,0) -- (-3,1) -- (-3,-1) -- (-2,0);
    \filldraw[fill=black] (6,0) -- (7,1) -- (7,-1) -- (6,0);
    \end{scope}

    \begin{scope}[shift={(15,-8)}]
    \node (F1) at (-4,0) {$T_3$};  
    \node (F2) at (2,1) {$B_C$};  
    \vertex [\MyRed, label=below:$x_1$] (a) at (0,-1) {};
    \vertex [\MyRed, label=below:$x_2$] (b) at (1,-1) {};
    \vertex [\MyRed, label=below:$x_3$] (c) at (2,-1) {};
    \vertex [\MyRed, label=below:$x_4$] (d) at (3,-1) {};
    \vertex [\MyRed, label=below:$x_5$] (e) at (4,-1) {};
    
    \vertex [\MyRed] (l1) at (0,0) {}; 
    \vertex [\MyRed] (l2) at (1,0) {};
    \vertex [\MyRed] (l3) at (2,0) {};
    \vertex [\MyRed] (l4) at (3,0) {};
    \vertex [\MyRed] (l5) at (4,0) {};
    
    \draw
    (l1) edge [\MyRed] (l5)
    (l1) edge [\MyRed] (a)
    (l2) edge [\MyRed] (b)
    (l3) edge [\MyRed] (c)
    (l4) edge [\MyRed] (d)
    (l5) edge [\MyRed] (e)
    (-2,0) edge node [label=below:$e_1$] {} (l1)
    (-2,0) edge (l1)
    (l5) edge node [label=below:$e_5$] {} (6,0)
    (l5) edge (6,0)
    ;
    
    \filldraw[fill=black] (-2,0) -- (-3,1) -- (-3,-1) -- (-2,0);
    \filldraw[fill=black] (6,0) -- (7,1) -- (7,-1) -- (6,0);
    \end{scope}

    \end{tikzpicture}
    \caption{\stevenrevise{If $F_C$ has exactly one inside-out block $B$ (green) and $|B\cap C| = 1$, and at least one of the two following situations holds -- (i) in at least one tree $T[B]$ behaves like in $T_1$ and in at least one tree $T[B]$ behaves like in $T_2$, (ii) in at least one tree
    $T[B]$ behaves like in $T_3$ -- then all the taxa in $(C \setminus B)$ are necessarily singletons (shown in orange). After splitting $B$ into $L$, $R$ and $(B \cap C)$ (see Figure \ref{fig:siblings} for the definition of $L$ and $R$) all the blocks intersecting $C$ can be merged into a single block $B_C$ (red).
}}
  \label{fig:one_intersection_two_of_three.}
    \end{center}
\end{figure}
\vspace{0.4cm}

    The third case is that $\Fu_C$ has \textbf{no inside-out blocks}. We say that $B$ is a \emph{bypass block} with respect to $T \in \T_C$ if $B \cap C = \emptyset$ but $T[B]$ uses both $e_1$ and $e_r$. If $\Fu_C$ has no bypass blocks with respect to any tree $T \in \T_C$, then we can construct $\Fu_C'$ from $\Fu_C$ simply by merging all blocks contained in $C$ into a single block $C$ and we are done.
    
    Now assume that $\Fu_C$ has at least one bypass block $B$ with respect to some $T \in \T_C$. This implies that all elements of $C$ are singleton blocks in $\Fu_C$, so $\Fu_C$ has size at least $r+1$. Since $\Fu_C$ has size at most $\ku$ by assumption, it follows that $3 \leq r < \ku$ and hence that $r=t+1$. In this case, we construct $\Fu_C'$ from $\Fu_C$ as follows. While there exists a tree $T\in\T_C$ for which $B$ is a bypass block with respect to $T$, split $B$ 
    into~$B_L,B_R$ where $T[B_L]$ is incident to edge $e_1$ \stevenrevise{and} $T[B_R]$ is incident to edge $e_r$.

    After repeating this for all trees for which there is a bypass block (possibly splitting the same block multiple times), merge all blocks contained in $C$ (which are all singletons) into a single block $C$. \stevenrevise{See Figure \ref{fig:bypass_split}}. 
    It is again easy to see that~$\Fu_C'$ is an unrooted agreement forest. Clearly there is not overlap because there are no bypass blocks any more. Isomorphism again holds because $C$ has the exact same topology in every tree and we only split bypasses.

\begin{figure}[h]
    \begin{center}
    \begin{tikzpicture}[scale = 0.5]

    \node (F1) at (-4,0) {$T_1$};  
    \node (F2) at (2,1) {$B$};
    \vertex [\MyOrange, label=below:$x_1$] (a) at (0,-1) {};
    \vertex [\MyOrange, label=below:$x_2$] (b) at (1,-1) {};
    \vertex [\MyOrange, label=below:$x_3$] (c) at (2,-1) {};
    \vertex [\MyOrange, label=below:$x_4$] (d) at (3,-1) {};
    \vertex [\MyOrange, label=below:$x_5$] (e) at (4,-1) {};
    
    \vertex [\MyGreen] (l1) at (0,0) {}; 
    \vertex [\MyGreen] (l2) at (1,0) {};
    \vertex [\MyGreen] (l3) at (2,0) {};
    \vertex [\MyGreen] (l4) at (3,0) {};
    \vertex [\MyGreen] (l5) at (4,0) {};
    
    \draw
    (l1) edge [\MyGreen] (l5)
    (l1) edge (a)
    (l2) edge (b)
    (l3) edge (c)
    (l4) edge (d)
    (l5) edge (e)
    (-2,0) edge node [label=below:$e_1$] {} (l1)
    (-2,0) edge [\MyGreen] (l1)
    (l5) edge node [label=below:$e_5$] {} (6,0)
    (l5) edge [\MyGreen](6,0)
    ;
    
    \filldraw[fill=black] (-2,0) -- (-3,1) -- (-3,-1) -- (-2,0);
    \filldraw[fill=black] (6,0) -- (7,1) -- (7,-1) -- (6,0);

    \begin{scope}[shift={(0,-4)}]
    \node (F1) at (-4,0) {$T_2$};  
    \node (F2) at (2,1) {$B'$};
    \vertex [\MyOrange, label=below:$x_1$] (a) at (0,-1) {};
    \vertex [\MyOrange, label=below:$x_2$] (b) at (1,-1) {};
    \vertex [\MyOrange, label=below:$x_3$] (c) at (2,-1) {};
    \vertex [\MyOrange, label=below:$x_4$] (d) at (3,-1) {};
    \vertex [\MyOrange, label=below:$x_5$] (e) at (4,-1) {};
    
    \vertex [\MyBlue] (l1) at (0,0) {}; 
    \vertex [\MyBlue] (l2) at (1,0) {};
    \vertex [\MyBlue] (l3) at (2,0) {};
    \vertex [\MyBlue] (l4) at (3,0) {};
    \vertex [\MyBlue] (l5) at (4,0) {};
    
    \draw
    (l1) edge [\MyBlue] (l5)
    (l1) edge (a)
    (l2) edge (b)
    (l3) edge (c)
    (l4) edge (d)
    (l5) edge (e)
    (-2,0) edge node [label=below:$e_1$] {} (l1)
    (-2,0) edge [\MyBlue] (l1)
    (l5) edge node [label=below:$e_5$] {} (6,0)
    (l5) edge [\MyBlue](6,0)
    ;
    
    \filldraw[fill=black] (-2,0) -- (-3,1) -- (-3,-1) -- (-2,0);
    \filldraw[fill=black] (6,0) -- (7,1) -- (7,-1) -- (6,0);
    \end{scope}
    
    \begin{scope} [shift={(8,-2)}]
        \draw (0,0) edge [->] (2,0);
    \end{scope}
    
    \begin{scope}[shift={(15,0)}]
    \node (F1) at (-4,0) {$T_1$};  
    \node (F2) at (2,1) {$C$};  
    \vertex [\MyRed, label=below:$x_1$] (a) at (0,-1) {};
    \vertex [\MyRed, label=below:$x_2$] (b) at (1,-1) {};
    \vertex [\MyRed, label=below:$x_3$] (c) at (2,-1) {};
    \vertex [\MyRed, label=below:$x_4$] (d) at (3,-1) {};
    \vertex [\MyRed, label=below:$x_5$] (e) at (4,-1) {};
    
    \vertex [\MyRed] (l1) at (0,0) {}; 
    \vertex [\MyRed] (l2) at (1,0) {};
    \vertex [\MyRed] (l3) at (2,0) {};
    \vertex [\MyRed] (l4) at (3,0) {};
    \vertex [\MyRed] (l5) at (4,0) {};
    
    \draw
    (l1) edge [\MyRed] (l5)
    (l1) edge [\MyRed] (a)
    (l2) edge [\MyRed] (b)
    (l3) edge [\MyRed] (c)
    (l4) edge [\MyRed] (d)
    (l5) edge [\MyRed] (e)
    (-2,0) edge node [label=below:$e_1$] {} (l1)
    (-2,0) edge (l1)
    (l5) edge node [label=below:$e_5$] {} (6,0)
    (l5) edge (6,0)
    ;
    
    \filldraw[fill=black] (-2,0) -- (-3,1) -- (-3,-1) -- (-2,0);
    \filldraw[fill=black] (6,0) -- (7,1) -- (7,-1) -- (6,0);
    \end{scope}

    \begin{scope}[shift={(15,-4)}]
    \node (F1) at (-4,0) {$T_2$};  
    \node (F2) at (2,1) {$C$};  
    \vertex [\MyRed, label=below:$x_1$] (a) at (0,-1) {};
    \vertex [\MyRed, label=below:$x_2$] (b) at (1,-1) {};
    \vertex [\MyRed, label=below:$x_3$] (c) at (2,-1) {};
    \vertex [\MyRed, label=below:$x_4$] (d) at (3,-1) {};
    \vertex [\MyRed, label=below:$x_5$] (e) at (4,-1) {};
    
    \vertex [\MyRed] (l1) at (0,0) {}; 
    \vertex [\MyRed] (l2) at (1,0) {};
    \vertex [\MyRed] (l3) at (2,0) {};
    \vertex [\MyRed] (l4) at (3,0) {};
    \vertex [\MyRed] (l5) at (4,0) {};
    
    \draw
    (l1) edge [\MyRed] (l5)
    (l1) edge [\MyRed] (a)
    (l2) edge [\MyRed] (b)
    (l3) edge [\MyRed] (c)
    (l4) edge [\MyRed] (d)
    (l5) edge [\MyRed] (e)
    (-2,0) edge node [label=below:$e_1$] {} (l1)
    (-2,0) edge (l1)
    (l5) edge node [label=below:$e_5$] {} (6,0)
    (l5) edge (6,0)
    ;
    
    \filldraw[fill=black] (-2,0) -- (-3,1) -- (-3,-1) -- (-2,0);
    \filldraw[fill=black] (6,0) -- (7,1) -- (7,-1) -- (6,0);
    \end{scope}
    \end{tikzpicture}
    \caption{\stevenrevise{$B$ (green) and $B'$ (blue) are examples of bypass blocks. As soon as there is at least
    one bypass block in $F_C$, all the taxa in $C$ are necessarily singletons (shown in orange). After splitting all bypass blocks these singletons can be merged into a single block $C$, show in in red.}}
    \label{fig:bypass_split}
    \end{center}
\end{figure}
\vspace{0.4cm}
    
    The total number of times a block is split is at most $t$ and after that we merged $t+1$ singleton blocks into a single block. Hence, the size of $\Fu_C'$ is at most $\ku$.

\end{proof}

\subsection{Tightness of the Truncation Length} %
\label{subsec:tightness}

It is natural to ask whether the truncation length $ \rvalue$ used in the chain reduction can be improved i.e. reduced in size. We show that for various combinations of $t$ and $k$ the bound is tight. (For $t=2$ reducing chains to length 3 is already well-known to be tight).

Our first family of tight examples is as follows; we show for every $t \geq 3$ that there is an instance where reducing the $t+1$ term to $t$ in $\rvalue$ is not safe.

Let $t \geq 3$. Construct $t$ different trees $T_1, \ldots T_t$ by starting with a $(t+1)$-chain and denote the taxa in this chain $1, 2, \ldots, t, t+1$. Then for each $T_i$ in $\T$ insert a common chain $C' = (x_1, x_2, \cdots, x_m)$ after the $i$th taxon in the $t$-chain; we let $m=2(t+2)$. Finally, in \stevenrevise{$T_2$} (but not in any of the other trees) we reverse the order of the chain $C'$, resulting in $x_m$ being sibling to taxon 1 in that tree. See Figure \ref{FIG_unrooted_revised}.

Note that if we truncate $C'$ to a chain $C$ of length $t$, then the resulting modified set of trees has an agreement forest of size at most $t+1$: we take the taxa from the $(t+1)$\stevenrevise{-}chain as a single block, together with $|C|=t$ singleton blocks. We will show that the original trees did not have an agreement forest of size $t+1$ or less. From this we will conclude that taking a truncation length of $\min \{ \max \{ k,3\}, t \}$ and parameter $k=(t+1)$ is not safe, because the first set of trees do not have an agreement forest of size at most $t+1$, but the reduced trees do.

Consider then the original set of trees. Note firstly that if any block $B$ of an agreement forest of the original trees contains two or more taxa from $C'$, then no two taxa from $\{1, 2, \ldots, t, t+1\}$ can be in the same block of the forest. Moreover, if $|B \cap C'| \geq 3$, then $B$ cannot contain any taxa from  $\{1, 2, \ldots, t, t+1\}$, meaning that the forest has at least $t+2$ blocks. So suppose there is no block $B$ with the property that $|B \cap C'| \geq 3$. Then due to the choice of $m = 2(t+2)$ it follows that the forest contains at least $t+2$ blocks; we are done.

\begin{figure}[h]
    \begin{center}
    \begin{tikzpicture}[scale = 0.4]

    \node (F1) at (-4.5,0) {$T_1$};    
    \vertex [label=left:$1$] (1) at (-2,0) {};
    \vertex [label=below:$2$] (2) at (1,-1) {};
    \vertex [label=below:$3$] (3) at (2,-1) {};
    \vertex [label=below:$4$] (4) at (3,-1) {};
    \vertex [label=below:$t$] (t') at (5,-1) {};
    \vertex [label=right:$t+1$] (t) at (6,0) {};
    
    \node (C1) [draw, line width = 1pt, minimum width=0.5cm, minimum height=0.5cm] at (-0.5, 0) {$\overrightarrow{C'}$};
    
    \vertex (l1) at (1,0) {}; 
    \vertex (l2) at (2,0) {};
    \vertex (l3) at (3,0) {};
    \vertex (l7) at (5,0) {};
    
    \draw
    (1) edge (C1)
    (C1) edge (l3)
    (l3) edge[dotted] (l7)
    (l1) edge (2)
    (l2) edge (3)
    (l3) edge (4)
    (l7) edge (t')
    (l7) edge (t)
    ;
    
    \node (F2) at (-4.5,-3.5) {$T_2$};    
    \vertex [label=left:$1$] (1) at (-2,-3.5) {};
    \vertex [label=below:$2$] (2) at (-1,-4.5) {};
    \vertex [label=below:$3$] (3) at (2,-4.5) {};
    \vertex [label=below:$4$] (4) at (3,-4.5) {};
    \vertex [label=below:$t$] (t') at (5,-4.5) {};
    \vertex [label=right:$t+1$] (t) at (6,-3.5) {};
    
    \node (C1) [draw, line width = 1pt, minimum width=0.5cm, minimum height=0.5cm] at (0.5, -3.5) {$\overleftarrow{C'}$};
    
    \vertex (l1) at (-1,-3.5) {}; 
    \vertex (l2) at (2,-3.5) {};
    \vertex (l3) at (3,-3.5) {};
    \vertex (l7) at (5,-3.5) {};
    
    \draw
    (1) edge (C1)
    (C1) edge (l3)
    (l3) edge[dotted] (l7)
    (l1) edge (2)
    (l2) edge (3)
    (l3) edge (4)
    (l7) edge (t')
    (l7) edge (t)
    ;
    
    \node (F3) at (-4.5,-7) {$T_3$};    
    \vertex [label=left:$1$] (1) at (-2,-7) {};
    \vertex [label=below:$2$] (2) at (-1,-8) {};
    \vertex [label=below:$3$] (3) at (0,-8) {};
    \vertex [label=below:$4$] (4) at (3,-8) {};
    \vertex [label=below:$t$] (t') at (5,-8) {};
    \vertex [label=right:$t+1$] (t) at (6,-7) {};
    
    \node (C1) [draw, line width = 1pt, minimum width=0.5cm, minimum height=0.5cm] at (1.5, -7) {$\overrightarrow{C'}$};
    
    \vertex (l1) at (-1,-7) {}; 
    \vertex (l2) at (0,-7) {};
    \vertex (l3) at (3,-7) {};
    \vertex (l7) at (5,-7) {};
    
    \draw
    (1) edge (C1)
    (C1) edge (l3)
    (l3) edge[dotted] (l7)
    (l1) edge (2)
    (l2) edge (3)
    (l3) edge (4)
    (l7) edge (t')
    (l7) edge (t)
    ;
    
    \node (F4) at (-4.5,-11.5) {$T_t$};    
    \vertex [label=left:$1$] (1) at (-2,-11.5) {};
    \vertex [label=below:$2$] (2) at (-1,-12.5) {};
    \vertex [label=below:$3$] (3) at (0,-12.5) {};
    \vertex [label=below:$4$] (4) at (1,-12.5) {};
    \vertex [label=below:$t$] (t') at (3,-12.5) {};
    \vertex [label=right:$t+1$] (t) at (6,-11.5) {};
    
    \node (C1) [draw, line width = 1pt, minimum width=0.5cm, minimum height=0.5cm] at (4.5, -11.5) {$\overrightarrow{C'}$};
    
    \vertex (l1) at (-1,-11.5) {}; 
    \vertex (l2) at (0,-11.5) {};
    \vertex (l3) at (1,-11.5) {};
    \vertex (l7) at (3,-11.5) {};
    
    \draw
    (1) edge (l3)
    (C1) edge (t)
    (l3) edge[dotted] (l7)
    (l1) edge (2)
    (l2) edge (3)
    (l3) edge (4)
    (l7) edge (t')
    (l7) edge (C1)
    ;
    
    \draw (F1) edge[dotted] (F2);
    \draw (F2) edge[dotted] (F3);
    \draw (F3) edge[dotted] (F4);
    
    \draw (9,-6) edge[->] node [label=above:\small{Reduce $C'$}] {} (13,-6);
    \draw (9,-6) edge[->] node [label=below:\small{to $C$}] {} (13,-6);

    \node (F1) at (27,0) {$T_{C1}$};    
    \vertex [label=left:$1$] (1) at (15,0) {};
    \vertex [label=below:$2$] (2) at (18,-1) {};
    \vertex [label=below:$3$] (3) at (19,-1) {};
    \vertex [label=below:$4$] (4) at (20,-1) {};
    \vertex [label=below:$t$] (t') at (22,-1) {};
    \vertex [label=right:$t+1$] (t) at (23,0) {};
    
    \node (C1) [draw, line width = 1pt, minimum width=0.5cm, minimum height=0.5cm] at (16.5, 0) {$\overrightarrow{C}$};
    
    \vertex (l1) at (18,0) {}; 
    \vertex (l2) at (19,0) {};
    \vertex (l3) at (20,0) {};
    \vertex (l7) at (22,0) {};
    
    \draw
    (1) edge (C1)
    (C1) edge (l3)
    (l3) edge[dotted] (l7)
    (l1) edge (2)
    (l2) edge (3)
    (l3) edge (4)
    (l7) edge (t')
    (l7) edge (t)
    ;
    
    \node (F2) at (27,-3.5) {$T_{C2}$};    
    \vertex [label=left:$1$] (1) at (15,-3.5) {};
    \vertex [label=below:$2$] (2) at (16,-4.5) {};
    \vertex [label=below:$3$] (3) at (19,-4.5) {};
    \vertex [label=below:$4$] (4) at (20,-4.5) {};
    \vertex [label=below:$t$] (t') at (22,-4.5) {};
    \vertex [label=right:$t+1$] (t) at (23,-3.5) {};
    
    \node (C1) [draw, line width = 1pt, minimum width=0.5cm, minimum height=0.5cm] at (17.5, -3.5) {$\overleftarrow{C}$};
    
    \vertex (l1) at (16,-3.5) {}; 
    \vertex (l2) at (19,-3.5) {};
    \vertex (l3) at (20,-3.5) {};
    \vertex (l7) at (22,-3.5) {};
    
    \draw
    (1) edge (C1)
    (C1) edge (l3)
    (l3) edge[dotted] (l7)
    (l1) edge (2)
    (l2) edge (3)
    (l3) edge (4)
    (l7) edge (t')
    (l7) edge (t)
    ;
    
    \node (F3) at (27,-7) {$T_{C3}$};    
    \vertex [label=left:$1$] (1) at (15,-7) {};
    \vertex [label=below:$2$] (2) at (16,-8) {};
    \vertex [label=below:$3$] (3) at (17,-8) {};
    \vertex [label=below:$4$] (4) at (20,-8) {};
    \vertex [label=below:$t$] (t') at (22,-8) {};
    \vertex [label=right:$t+1$] (t) at (23,-7) {};
    
    \node (C1) [draw, line width = 1pt, minimum width=0.5cm, minimum height=0.5cm] at (18.5, -7) {$\overrightarrow{C}$};
    
    \vertex (l1) at (16,-7) {}; 
    \vertex (l2) at (17,-7) {};
    \vertex (l3) at (20,-7) {};
    \vertex (l7) at (22,-7) {};
    
    \draw
    (1) edge (C1)
    (C1) edge (l3)
    (l3) edge[dotted] (l7)
    (l1) edge (2)
    (l2) edge (3)
    (l3) edge (4)
    (l7) edge (t')
    (l7) edge (t)
    ;
    
    \node (F4) at (27,-11.5) {$T_{Ct}$};    
    \vertex [label=left:$1$] (1) at (15,-11.5) {};
    \vertex [label=below:$2$] (2) at (16,-12.5) {};
    \vertex [label=below:$3$] (3) at (17,-12.5) {};
    \vertex [label=below:$4$] (4) at (18,-12.5) {};
    \vertex [label=below:$t$] (t') at (20,-12.5) {};
    \vertex [label=right:$t+1$] (t) at (23,-11.5) {};
    
    \node (C1) [draw, line width = 1pt, minimum width=0.5cm, minimum height=0.5cm] at (21.5, -11.5) {$\overrightarrow{C}$};
    
    \vertex (l1) at (16,-11.5) {}; 
    \vertex (l2) at (17,-11.5) {};
    \vertex (l3) at (18,-11.5) {};
    \vertex (l7) at (20,-11.5) {};
    
    \draw
    (1) edge (l3)
    (C1) edge (t)
    (l3) edge[dotted] (l7)
    (l1) edge (2)
    (l2) edge (3)
    (l3) edge (4)
    (l7) edge (t')
    (l7) edge (C1)
    ;
    
    \draw (F1) edge[dotted] (F2);
    \draw (F2) edge[dotted] (F3);
    \draw (F3) edge[dotted] (F4);
        
    \end{tikzpicture}
    \caption{If we take the common chain $C'$ to have length \stevenrevise{2(t+2)} then truncating it to a length $t$ common chain $C$ alters the size of a maximum agreement forest from at least $t+2$ to at most $t+1$.}
    \label{FIG_unrooted_revised}
    \end{center}
\end{figure}

\newpage

Our second family of tight examples shows that for every $k\geq 4$ a truncation length of
$\min \{ \max \{ k-1,3\}, t+1 \}$ is also not safe. The construction is similar to the previous case,
except that we take $m=2k+1$, $t = k+2$ and the chain $C'$ in \stevenrevise{$T_2$} is not reversed. \stevenlater{That is, there are $t=k+2$ trees and each of these trees is obtained by inserting the chain $C'$ into a starting chain comprising $t+1 = k+3$ taxa}. After truncation of the common chain to length $\min \{ \max \{ k-1,3\}, t+1 \} = k-1$, observe that the reduced trees have an agreement forest of size $k$ (i.e. the original taxa $\{1, 2, \ldots, t, t+1\}$ in one block, and $k-1$ singleton blocks corresponding to taxa in $C$). Before truncation, note that if a block $B$ has the property that $|B \cap C'| \geq 3$, then the taxa $\{2, \ldots, t\}$ must be in separate blocks, meaning that there will be at least $t-1 = k+1 > k$ blocks in the forest. But if no such block $B$ exists, then there will be at least $(2k + 1)/2 > k$ blocks. Hence, prior to truncation, an agreement forest contains at least %
\leo{$k+1$}
blocks.

\section{Bounding the \stevenfinal{Number of Leaves in Each Tree}}\label{sec:kern}

    \begin{Lemma}\label{LEM taxa bound}
    Let~$\T$ be a set of~$t$ unrooted binary phylogenetic trees on~$X$ and~$\Fu$ an unrooted agreement forest for~$\T$ of size~$\leonewer{k\geq 2}$. If neither the subtree nor the chain reduction are applicable in~$\T$, then for each block~$B\in\Fu$, it holds that
    \[
        |B| \leq 2r\sum_{T\in\T} deg^T(B) - 3r
    \]
    with $\rvalue$ and~$t\geq 2$.
\end{Lemma}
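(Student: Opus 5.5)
We fix a block $B\in\Fu$ and work with the common topology $T_B := T|B$, which is the same for every $T\in\T$. In each $T\in\T$ the subtree $T[B]$ is $T_B$ with each edge subdivided by some number of degree-2 vertices, and each such vertex carries exactly one of the $deg^T(B)$ edges leaving $T[B]$. Call an edge of $T_B$ \emph{clean} if it is subdivided in no tree of $\T$. Write $d := \sum_{T\in\T} deg^T(B)$. The number of edges of $T_B$ that are subdivided in at least one tree is then at most $d$. We may assume $|B|\ge 2$, since otherwise the bound holds trivially as $d\ge t\ge 2$ (because $k\ge 2$ and each tree is connected).

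The first step is to decompose $T_B$ into \emph{clean regions}, namely the components obtained by deleting all non-clean edges. Any pendant subtree of $T_B$ that lies entirely inside a clean region is a common pendant subtree of all $T\in\T$, rooted at the same place. Hence, by non-applicability of the subtree reduction, each such maximal pendant clean piece is a single leaf. Using this, we show that every clean region is a caterpillar: its leaves form a chain whose spine runs between at most two ``attachment'' points, and these attachment points are endpoints of non-clean edges. A chain of leaves of $B$ hanging from a spine of clean edges is a common chain of $\T$. It is not a common chain of length exceeding $r$, because otherwise the chain reduction would apply. One subtlety needs care here: the chain must be common in the original trees. This holds because no outside edges attach along a clean spine in any tree, so parents stay adjacent.

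The second step is counting. Contract each clean region to a node; the result is a tree whose edges are the non-clean edges of $T_B$, so it has at most $d$ edges and at most $d+1$ nodes. A region consisting of just a leaf counts as one taxon. Each region contributes at most $r$ taxa via its chain, plus possibly more when a region has several attachment points. So I would instead count attachment structure: a region of degree $j$ in the contracted tree is a tree with $j$ boundary points, and it is a union of at most $j-1$ (or, for $j=1$, at most one) maximal common chains, hence carries at most $r\cdot\max(j-1,1)$ taxa. Summing over regions gives $|B|\le r\sum_{\text{regions}}\max(\deg-1,1)$. Both the number of leaves of the contracted tree and $\sum(\deg -1)$ are bounded in terms of the $\le d$ edges, which yields $|B|\le 2rd - c\,r$. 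The tightening to $-3r$ comes from noting that extremal leaves of the contracted tree and pendant leaves of $T_B$ with degree-1 regions are single taxa rather than $r$ taxa. It also uses that a non-clean edge endpoint inside $T_B$ that is a leaf contributes only one taxon.

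The main obstacle will be the precise bookkeeping needed to reach exactly $2rd-3r$ rather than merely $O(rd)$. It requires handling the boundary cases cleanly: regions that are single leaves, chains that end in a cherry, and the correspondence between $d$ (summed over trees) and the set of non-clean edges (counted once even if subdivided in many trees). I would first prove the weaker bound $|B|\le 2rd$ via the region argument, then recover the $-3r$ by a charging argument on the leaves of the contracted tree. If that fails, I would induct on the number of non-clean edges, splitting $T_B$ at one non-clean edge.
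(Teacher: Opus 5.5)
Your proposal has a genuine gap. The structural claim in your first step is false, the per-region bound in your second step inherits the error, and the final accounting is not actually carried out.

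\textbf{The per-region bound fails.} A clean region need not be a caterpillar with at most two attachment points, because it can contain an internal vertex $c$ of $T|B$ all three of whose edges are clean. Let each edge at $c$ begin a clean path $c,u_1,\ldots,u_r$. Every $u_i$ carries one taxon on a clean pendant edge, and the third edge of $u_r$ is non-clean and leads to another internal vertex of $T|B$. Neither reduction is triggered. No clean pendant subtree has two taxa. The taxa on each arm form a maximal common chain of length exactly $r$: two arms never join, since the parents of their first taxa are at distance $2$ through $c$ in every tree, and the chain is cut at $u_r$ in any tree that subdivides the outgoing edge. This region has $j=3$ attachment edges but carries $3r$ taxa, exceeding your bound $r\max(j-1,1)=2r$. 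In general a region with $j$ attachment edges can host up to $2j-3$ maximal common chains, one per edge of a binary tree with $j$ leaves, not $j-1$.

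\textbf{Summing per-region bounds cannot give $2rd-3r$, even after that correction.} The bookkeeping you defer as the ``main obstacle'' is the missing proof, for two reasons.

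First, a taxon $y$ with a non-clean pendant edge (a single-taxon region) can lie on the same common chain as the clean taxa of the adjacent region. Suppose $y$'s neighbour $u$ also carries a clean taxon $x$, and $y$'s edge is subdivided at most once in each tree. Then the chain ending at $x$ continues to $y$ in every tree. Counting $y$ separately, even as $1$ rather than $r$, overshoots. Take a region whose skeleton is a binary tree with $L$ leaves, each skeleton leaf carrying one clean taxon and one taxon whose edge is subdivided once in a single tree. Here $d=L$ and there are at most $r(2L-3)=r(2d-3)$ taxa, but your count gives $r(2L-3)+L$.

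Second, your contracted tree counts each non-clean edge once, while $d$ counts subdivision vertices with multiplicity, and the difference matters. If $y$'s edge is subdivided twice in the same tree, $y$ is cut off from the chain through $x$. So no bound of the form $r(2N-3)$ in the number $N$ of non-clean edges holds. For instance, take $t=2$, $r=3$, and $B$ a five-taxon caterpillar in which one taxon of each end cherry has its pendant edge subdivided twice, in $T_1$ and $T_2$ respectively. Then $N=2$ but $|B|=5>3$. Hence ``bound in terms of non-clean edges, then use $N\le d$'' cannot give the sharp constant.

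\textbf{How the paper avoids this.} The paper does no region bookkeeping. It subdivides each edge $e$ of $T^*|B$ by $\sum_T n(e,T)$ vertices, then deletes the taxa and suppresses only newly created degree-2 vertices. The resulting tree has exactly $d$ vertices of degree at most $2$, since the subtree rule prevents a degree-3 vertex from becoming a leaf. By a short induction it therefore has at most $2d-3$ edges. Each maximal chain of the subdivided tree is a common chain with at most $r$ taxa, and is charged to its own edge. The paper's second case is exactly the absorption of $y$ described above. This yields $|B|\le r(2d-3)$ directly.
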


\begin{proof}
    Let~$B\in\Fu$ and $d=\sum_{T\in\T} deg^T(B)$. Note that if $|B|=1$ then $d = t \geq 2$ and the right hand side of the claimed inequality evaluates to at least $4r - 3r = r \geq 3$, so the claim holds immediately. Hence we can focus on $|B| \geq 2$.
    
    For an arbitrary tree $T^*\in\T$, consider the tree~$T_B$ obtained from~$T^*|B$ as follows. For an  edge~$e$ of~$T^*|B$ and tree~$T\in\T$, let $n(e,T)$ be the number of internal vertices on the path in~$T[B]$ corresponding to\footnote{\stevenrevise{If $e=\{u,v\}$ then the path corresponding to $e$ in $T[B]$ is the path that starts at $u$ and ends at $v$. The interior vertices of this path are the degree-2 vertices that are suppressed in order to create $e$ in $T|B$.}} edge~$e$. Subdivide each edge~$e$ of~$T^*|B$ by $\sum_{T\in\T}n(e,T)$ degree-2 vertices. Call the obtained tree~$T_B^{\mbox{full}}$. Then, in~$T_B^{\mbox{full}}$, delete all leaves and suppress their parents if they become degree-$2$. \stevenlater{(Note however that we do not suppress vertices that were degree 2 \emph{before} the deletion of the leaves)}. This gives tree~$T_B$, see Figure~\ref{fig:component_bound} for an example.

    \begin{figure}
        \centering
        \includegraphics[width=\textwidth]{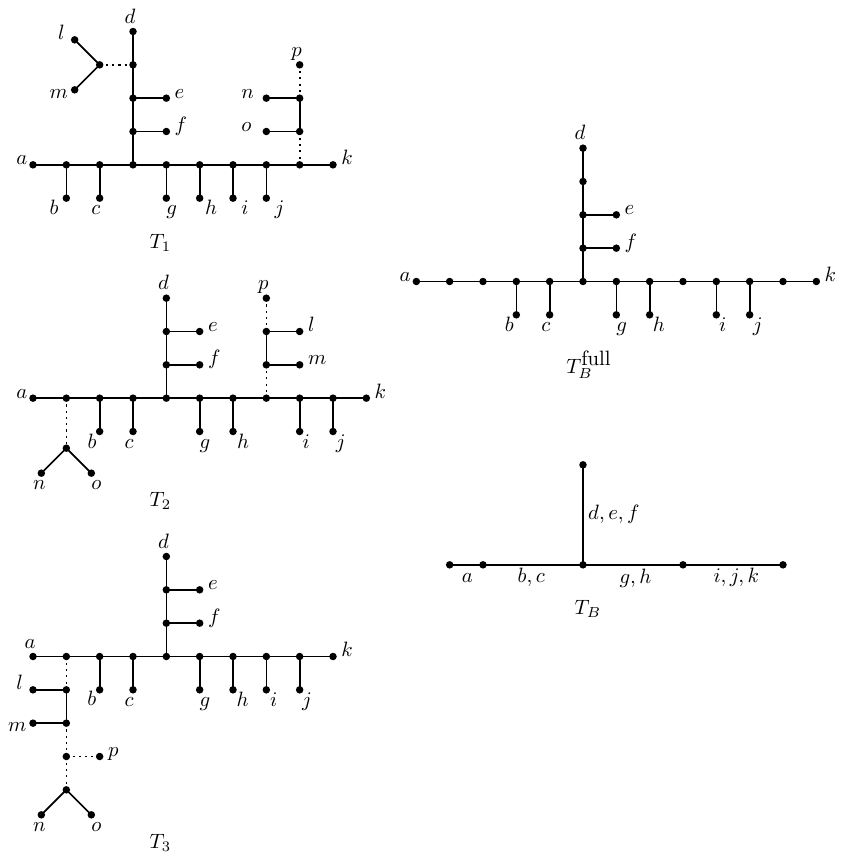}
        \caption{Example of the construction of trees~$T_B^{\mbox{full}}$ and~$T_B$ in the proof of Lemma~\ref{LEM taxa bound} with~$B=\{a,b,\ldots ,k\}$. Each edge of~$T_B$ is labelled by the leaves that are charged to it. The edges labelled~$b,c$ and~$g,h$ are considered in the first case of the proof, the edges labelled~$d,e,f$ and~$i,j,k$ are considered in the second case, while the edge labelled~$a$ is considered in the third case.}
        \label{fig:component_bound}
    \end{figure}

    Observe that the total number of degree-$2$ vertices and leaves of~$T_B$ is~$d$ and that~$\leonewer{d\geq t\geq 2}$. %

    We prove by induction on~$d$ that a tree~$S$ with~$d$ vertices of degree at most~$2$ has at most~$2d-3$ edges. If~$d=2$ then~$S$ has exactly one edge and we are done. If~$d\geq 3$, let~$S'$ be the result of deleting a leaf and suppressing its neighbour if it becomes degree-$2$. This reduces the %
    \leonewer{number of vertices of degree at most~$2$}
    by~$1$. %
    By induction, $S'$ has at most $2(d-1)-3=2d-5$ edges and therefore~$S$ has at most~$2d-3$ edges. Hence, $T_B$ has at most~$2d-3$ edges.

    We now show that the number of leaves of~$T_B^{\mbox{full}}$ is at most~$rm$ with~$m$ the number of edges of~$T_B$. Consider any maximal %
    chain~$C$ of~$T_B^{\mbox{full}}$. Then the leaves of~$C$ are deleted from~$T_B^{\mbox{full}}$ in the construction of~$T_B$ and their neighbours are suppressed if they get degree-$2$. Observe that it is not possible that a neighbour of a leaf had degree-$3$ in~$T_B^{\mbox{full}}$ and becomes degree-$1$ in~\steven{$T_B$}
    because then the subtree reduction would have been applicable in~$\T$. Hence, we can distinguish the following three cases. \leorevise{In each case we will map or ``charge'' each leaf to an edge of~$T_B$. We will do this in such a way that leaves from different maximal chains of~$T_B^{\mbox{full}}$ are charged to different edges of~$T_B$, which will make it possible to bound the total number of leaves.}
    
    The \textbf{first case} is that all neighbours of leaves in~$C$ are suppressed. Let~$e$ be the edge of~$T_B$ created by suppressing these neighbours. Charge all leaves of~$C$ to edge~$e$.
    
    \steven{The \textbf{second case} is that~$|C|\geq 2$
    \leonew{and there is at least one leaf} $x$ in $C$ such that the neighbour $v$ of $x$ in~$T_B^{\mbox{full}}$ is a
    degree-$2$ vertex, and thus is not suppressed when $x$ is deleted. \leonew{Note that there can be at most two such leaves since~$C$ is a chain.}
   If there are exactly \emph{two} such leaves, %
   then~$B$ is a common chain,
   \leonew{so~$B=C$,} and $d=2$.}
   \leonew{In this case,~$T_B$ has exactly one edge and we charge all leaves of~$B$ to it.}
    \steven{%
    \leonew{If} there is exactly \emph{one} such leaf~$x$, then}
    all other neighbours of leaves in~$C$ are suppressed, creating an edge~$e$. %
    Charge all the leaves of~$C$, \leonew{including~$x$}, to~$e$. 
    
    It remains to consider the \textbf{third case}: that~$C=\{x\}$ and the neighbour~$v$ of~$x$ in~$T_B^{\mbox{full}}$ has degree~$2$. Then~$v$ is not suppressed. Consider the edge~$e$ of~$T_B^{\mbox{full}}$ incident to~$v$ but not to~$x$. Charge~$x$ to~$e$. Observe that no other leaves are charged to~$e$ because otherwise~$C$ would not have been maximal.
    
    In general, leaves from different maximal chains of~$T_B^{\mbox{full}}$ are never charged to the same edge of~$T_B$. Moreover, each maximal chain contains at most~$r$ leaves because otherwise the chain reduction would have been applicable in~$\T$. Hence, it follows that the number of leaves of~$T_B^{\mbox{full}}$ is at most the number of edges of~$T_B$ times~$r$.
    
    This concludes the proof since~$|B|$ is equal to the number of leaves of~$T_B^{\mbox{full}}$, which we have shown to be at most~$r$ times the number of edges of~$T_B$, which we have shown to be at most~$2d-3$.
\end{proof}

\begin{Lemma}\label{LEM degree bound}
    For every $T\in\T$ the sum of the degrees of each block $B$ in $\Fu$ is bounded as follows, where $k$ is the size of $F$:
    \[
        \sum_B deg^T(B) \leq 2\ku-2.
    \]
\end{Lemma}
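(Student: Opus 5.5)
The plan is to fix a tree $T\in\T$ and build an auxiliary graph by contracting each block. Since the subtrees $T[B]$, $B\in\Fu$, are vertex-disjoint, the graph $T$ splits into $k$ disjoint connected subgraphs $T[B]$ plus the remaining vertices and edges, which lie in no $T[B]$. The quantity $\sum_B deg^T(B)$ counts, for each block, the edges with exactly one endpoint in $T[B]$. An edge joining $T[B]$ to $T[B']$ with $B\neq B'$ is counted twice. An edge from $T[B]$ to an uncovered vertex is counted once. The argument is a degree count in a forest obtained by contracting each $T[B]$ to a single vertex.

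First, I contract each subtree $T[B]$ to a vertex $v_B$. Contracting disjoint connected subgraphs of a tree yields a tree $T'$. Its vertices are the $k$ block-vertices $v_B$ together with the uncovered vertices of $T$. Then $deg^T(B)=deg^{T'}(v_B)$, because every edge leaving $T[B]$ survives as a distinct edge at $v_B$, and the result is simple since $T$ is a tree. So we must show
\[
\sum_B deg^{T'}(v_B)\le 2k-2 .
\]

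Second, I check that every uncovered vertex $u$ has degree $3$ in $T'$ and is not a leaf. Leaves of $T$ are taxa, and taxa are always covered, so uncovered vertices are internal vertices of $T$. Their edges go to other uncovered vertices or to block-vertices. No two edges of $u$ collapse onto the same $v_B$: that would create a cycle in $T$ through $u$ and the connected subgraph $T[B]$. So $deg^{T'}(u)=3$.

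Let $U$ be the number of uncovered vertices. Then $T'$ has $k+U$ vertices and $k+U-1$ edges, so the handshake lemma gives
\[
\sum_B deg^{T'}(v_B)+3U=2(k+U-1),
\]
that is, $\sum_B deg^{T'}(v_B)=2k-2-U\le 2k-2$.

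The main point needing care is that the contraction genuinely gives a tree and preserves degrees, meaning no multi-edges and no loops. Both facts follow from disjointness, connectivity of each $T[B]$, and acyclicity of $T$. A single tree suffices because the bound holds for each $T$ separately. In the rooted variant I would treat the root (degree 2) analogously, or attach a root leaf $\rho$, but that is outside this unrooted statement.
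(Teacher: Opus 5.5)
Your proof is correct, and it reaches the bound by a different route from the paper's. The paper never keeps the uncovered vertices as separate nodes. It repeatedly grows a block across an edge $e$ to an uncovered vertex $v$, noting that this cannot decrease the block's degree because $v$ is not a leaf and so has an edge other than $e$. Once every vertex has been absorbed, the $k$ expanded blocks are joined by exactly $k-1$ edges, so their total degree is $2(k-1)$, which dominates the original sum.

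You instead contract each $T[B]$ in place and let the handshake lemma account for the $U$ uncovered vertices, each of degree exactly $3$. This gives you the exact identity $\sum_B deg^T(B) = 2k-2-U$. It shows that the slack in the lemma is precisely the number of internal vertices of $T$ lying in no $T[B]$. The paper's expansion step in fact raises the degree by exactly one each time, but it only records ``at least''.

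Your version also replaces an iterative procedure by a single count. It makes explicit the no-loop/no-multi-edge check that the paper's final ``exactly $k-1$ edges'' relies on tacitly. The paper's argument needs only that uncovered vertices have degree at least $2$, but your count adapts to that setting just as easily: it gives $\sum_B deg^{T'}(v_B) \le 2(k+U-1) - 2U = 2k-2$.
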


    \begin{proof}
\stevenlater{Let $T$ be any tree in $\T$ and let $B$ be a block of $\Fu$. Consider $T[B]$. If $T$ has an edge $e$ such that one endpoint $u$ of $e$ is a node of $T[B]$ and the other endpoint $v$ is not, and $v$ does not lie on $T[B']$ for any $B' \in \Fu$, then $v$ is necessarily not a leaf (because every taxon is in some block of the forest), and thus has degree larger than 1. If we expand $T[B]$ with the edge $e$, then the degree of this expanded block (i.e. the number of edges that have exactly one endpoint on the expanded block) is at least as large as the degree of $B$ in $T$: this is because there is at least one edge incident to $v$ that is not equal to $e$. We iterate this procedure to exhaustion i.e. until the point that no block $B \in \Fu$ can be further expanded in $T$. At this point, there will be exactly $k-1$ edges in $T$ that are not part of any expanded block, and each such edge will have its endpoints on two distinct expanded blocks. Hence, the total degree of the expanded blocks in $T$ will be exactly $2(k-1)$. By construction, this is an upper bound on the sum of the degrees of the original blocks in $T$, so 
$\sum_B deg^T(B) \leq 2\ku-2$.}   
    \end{proof}

\begin{Theorem} \label{THM kernel size}
    Let $\T$ be a set of~\leorevise{$t$} unrooted binary phylogenetic trees on $X$. \leonew{If there exists an unrooted agreement forest of~$\T$ of size at most~$k$ and neither the subtree nor chain reduction is applicable, then every tree in $\T$ has at most $\uks$ taxa, with $\rvalue$.}
\end{Theorem}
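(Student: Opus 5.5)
The plan is to sum the per-block bound of Lemma~\ref{LEM taxa bound} over all blocks of an agreement forest, and then control the total degree with Lemma~\ref{LEM degree bound}.

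Fix an unrooted agreement forest $\Fu$ of $\T$ of size $k' \leq \ku$. Throughout, $r$ is the truncation length $\rvalue$ attached to the target parameter $\ku$; irreducibility of $\T$ is with respect to this $r$. I will check that Lemma~\ref{LEM taxa bound} uses only this irreducibility and the size of $\Fu$, so it may be applied to $\Fu$ with this same $r$.

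Assuming $k' \geq 2$, every tree in $\T$ has leaf set $X$, and the blocks partition $X$. Hence
\[
|X| = \sum_{B\in\Fu}|B| \leq \sum_{B\in\Fu}\Bigl(2r\sum_{T\in\T}deg^T(B) - 3r\Bigr) = 2r\sum_{T\in\T}\sum_{B\in\Fu}deg^T(B) - 3rk'.
\]
Swapping the order of summation and applying Lemma~\ref{LEM degree bound} to each of the $t$ trees gives $\sum_{T}\sum_{B}deg^T(B) \leq t(2k'-2)$. Therefore
\[
|X| \leq 4trk' - 4tr - 3rk' = k'(4tr-3r) - 4tr.
\]
Since $t\geq 2$, we have $4tr - 3r > 0$, so this expression is increasing in $k'$. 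Replacing $k'$ by $\ku$ yields the claimed bound $\uks$.

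The step needing care is the degenerate case $k'=1$, which Lemma~\ref{LEM taxa bound} excludes. If some agreement forest has a single block, then all trees in $\T$ are identical. Whenever $|X|\geq 3$, any cherry is then a common pendant subtree, so the subtree reduction would apply; hence $|X|\leq 2$. In that case I would either note that $|X|\leq 2$ is trivially within the bound when $\ku\geq 2$, or pass to a forest of size $2$ when $|X| = 2$ by splitting into singletons, which is legitimate whenever $\ku \geq 2$. The case $\ku = 1$ is a trivial instance and can be excluded or treated separately.

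Apart from this degenerate case, the main point to verify is that the value of $r$ used in Lemma~\ref{LEM taxa bound} coincides with the one fixed by the reduction rules for the target $\ku$, rather than depending on the size of the particular forest $\Fu$. Its proof only uses that every maximal common chain has at most $r$ leaves, which holds under this convention.
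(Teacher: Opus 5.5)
Your proposal is correct and is essentially the paper's proof: you sum the per-block bound of Lemma~\ref{LEM taxa bound} over the blocks, exchange the order of summation, and apply Lemma~\ref{LEM degree bound} in each of the $t$ trees. Your extra care fixes points the paper glosses over. The paper silently replaces $-3r|\Fu|$ by $-3r\ku$, a step in the wrong direction when $|\Fu|<\ku$, which your monotonicity argument in $k'$ repairs. The paper also does not address the single-block case excluded by Lemma~\ref{LEM taxa bound}, nor that the stated bound is negative, hence false, when $\ku=1$.
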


\begin{proof}
    \leonew{Let~$\Fu$ be an unrooted agreement forest of~$\T$ of size at most~$k$. We can bound the number of leaves in each tree as follows.}
    \begin{align*}
        |X|   &\leq \sum_{B\in\Fu} |B|\\
            &\leq \sum_B \left( 2r\sum_T deg^T(B) -3r\right) &(\text{\LEM{taxa bound}}) \\
            &= 2r\sum_T  \sum_B deg^T(B) - \sum_B 3r\\
            &= 2r\sum_T \sum_B deg^T(B) - 3r\ku\\
            &\leq 2r\sum_T (2\ku - 2) - 3r\ku &(\text{\LEM{degree bound}})\\
            &= t2r(2\ku-2) - 3r\ku \\
            &= 4tr\ku - 4tr - 3r\ku.
    \end{align*}
\end{proof}
We note that for $t=2$, at which point $r=3$, the above bound becomes $24k - 24 - 9k = 15k-24$. This matches
exactly with the analysis in \cite{KelkL18} which shows that for these two reduction rules
the kernel has at most $15d_{\text{TBR}}-9$ taxa where $d_{\text{TBR}}$, the \emph{Tree Bisection and Reconnect} distance, is defined to be equal to the size of a maximum agreeement forest \emph{minus one}. Their analysis is tight: they show two trees for which the kernel has exactly $15d_{\text{TBR}}-9$ taxa. Hence, for $t=2$, our analysis is also tight.

\subsection{\stevenfinal{Adapting to Rooted Trees}}
\label{subsec:rootedkern}

It is well-known that the subtree reduction rule holds in the case of multiple rooted trees. The chain reduction rule is also safe for the case of multiple rooted trees, as the next lemma shows.

\begin{Lemma}\label{LEM safe chain reduction rooted}
    Let $\T$ be a set of rooted \leo{binary} phylogenetic trees on $X$ %
    \leo{and~$C'$ a common chain of~$\T$}. Let $\T_C$ be the result of applying the chain reduction rule. Then there exists a rooted agreement forest of $\T_C$ of size at most $\kr$ if and only if $\T$ has a rooted agreement forest of size at most $\kr$.
\end{Lemma}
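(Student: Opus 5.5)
We follow the structure of the proof of Lemma~\ref{LEM safe chain reduction} and point out only where rootedness changes the argument. The forward direction is again immediate from Observation~\ref{obs:closed}, taking $X'=X_C$. For the converse, let $\Fr_C$ be a rooted agreement forest of $\T_C$ of size at most $\kr$, and let $C=(x_1,\ldots,x_r)$ be the truncated chain. We aim to produce a forest $\Fr_C'$ of size at most $\kr$ in which $C$ lies in a single block $B_C$. Because $r\geq 3$ and rooted chains are oriented away from the root, the deleted taxa $x_{r+1},\ldots,x_m$ can then be reinserted into $B_C$, and the result is a rooted agreement forest of $\T$. This reinsertion is the standard extension fact for common chains, and in the rooted setting it is even easier, since the orientation is fixed.

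The key structural observation carries over. In each rooted $T\in\T_C$, the subtree $T[C]$ meets the rest of the tree through at most two edges. The first is $e_1$, the edge entering the parent of $x_1$ from above. The second is $e_r$: if $x_r$ and $x_{r-1}$ are not a cherry, this is the edge from the parent of $x_r$ to the subtree hanging below the chain. Consequently there are at most two inside-out blocks, and we again split into the cases of two, one or zero inside-out blocks.

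In each case we apply the same merge or split-and-merge operations, now checking rooted isomorphism. A useful simplification is that $e_1$ is always the edge towards the root. A block that uses $e_r$ but not $e_1$ therefore sits below the chain, and its root lies at the chain's lower end. This lets us verify directly that after merging, $T|B_C$ is the same rooted tree in every $T\in\T_C$: it is the common chain with a common pendant part attached at the same end, and the root is placed identically.

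The splitting operations also keep block count and validity under control. Splitting a block $B$ into $\{x\}$, $L$ and $R$ yields valid rooted blocks, because restrictions of a common rooted topology remain common. In the bypass case, a bypass block with respect to $T$ uses both $e_1$ and $e_r$; we split it into the part above the chain and the part below it. Each of the at most $t$ splits is paid for by merging the $r=t+1$ singleton blocks of $C$, exactly as in the unrooted argument. The reasoning that $r<\kr$ forces $r=t+1$ is unchanged.

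I expect the main obstacle to be the subcases with $|B\cap C|\le 2$ where $B$ behaves differently in different trees. In the unrooted argument one uses the fact that $T|B$ being identical across trees forces the remaining chain taxa to be singletons. In the rooted case we must re-check that $B\cap C$ (or $x$ together with $L$ and $R$) forms legitimate rooted blocks. We must also confirm that the rooted analogue of the ``uses both $e_1$ and $e_r$'' situation still forces the other chain taxa to be singletons. I would handle this by arguing, tree by tree, that any other block meeting $C$ would have to share a vertex on the chain path with $T[B]$, which contradicts vertex-disjointness.
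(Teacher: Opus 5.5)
Your proposal is correct and takes essentially the same route as the paper. The paper's proof just says that a careful re-reading of the unrooted argument, with its cases of two, one or zero inside-out blocks, goes through for rooted trees; your additional checks (that $e_1$ always points towards the root, that rooted equality survives the merge and split operations, and that the singleton claims follow from vertex-disjointness along the chain path) are exactly what that re-reading has to confirm.
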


\begin{proof}
    In the proof for the unrooted variation of the chain reduction rule, \stevenrevise{Lemma~\ref{LEM safe chain reduction}}, there were three main cases to discuss: \stevenlater{$\Fu_C$} had 2, 1 or 0 inside-out blocks. A careful re-reading of that proof shows that all these cases and their various subcases go through in the rooted case.
\end{proof}

 Let $\Fr$ be a rooted agreement forest. We define the \emph{outdegree} of a block~$B$ in~$T$, $outdeg^T(B)$, to be the number of edges of $T$ which have their tail on $T[B]$ but not their head.  
We state the following modification of \LEM{degree bound} without proof.

\begin{Lemma}\label{root:newguy}
    For every $T\in\T$ the sum of the outdegrees of each block $B$ in $\Fr$ is bounded as follows, where $\kr$ is the size of $\Fr$:
    \[
        \sum_B outdeg^T(B) \leq \kr-1.
    \]
\end{Lemma}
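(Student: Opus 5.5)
We need to prove that, for every rooted tree $T\in\T$, the outdegrees of the blocks of $\Fr$ satisfy $\sum_B outdeg^T(B)\leq \kr-1$. The plan mirrors the expansion argument in the proof of \LEM{degree bound}, adapted to directed edges.

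Fix $T\in\T$ and grow each $T[B]$ into an expanded subgraph $E(B)$ until the expanded subgraphs cover every vertex of $T$, while keeping them vertex-disjoint and connected. The growth rule is as follows. Suppose a vertex $v$ is on no current expanded block and its parent $u$ lies on some $E(B)$. Then add the edge $(u,v)$, together with $v$, to $E(B)$.

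We need the outdegree never to decrease under this step. Since $v$ lies on no $T[B']$, $v$ is not a leaf, because every leaf is a taxon in some block. So $v$ has two children, and adding $(u,v)$ removes one outgoing edge of $E(B)$ but creates the two edges leaving $v$. Hence the outdegree of $E(B)$ is nondecreasing at each step.

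We also need every uncovered vertex to eventually be reached. Suppose the root of $T$ lies on no $T[B]$. Then walk down from the root until the first vertex $w$ lying on some $T[B]$. Both children of an uncovered vertex have descendant leaves, so such a walk cannot get stuck. This is where the setting differs from the unrooted proof, and I would handle it as follows. Give the root the status of an extra vertex, and note that the argument only needs to count edges whose head is covered by a block distinct from the one containing the tail. An alternative that avoids this is to first expand upward: if the parent $p$ of the root of $T[B]$ is uncovered, attach $p$ and its incoming edge to $E(B)$. This does not decrease the outdegree, since the new outgoing edge from $p$ to its other child is added. Iterating upward and downward alternately until no uncovered vertex remains, every vertex of $T$ ends up in exactly one $E(B)$.

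At the end, the sets $E(B)$ partition $V(T)$ into $\kr$ connected subtrees, each rooted at its topmost vertex. The edges of $T$ not inside any $E(B)$ are exactly those joining distinct parts. Contracting each part gives a rooted tree on $\kr$ vertices, so there are exactly $\kr-1$ such edges. Each such edge has its tail in exactly one part and its head in another, so it contributes exactly $1$ to the total outdegree. Therefore $\sum_B outdeg^T(E(B))=\kr-1$. By the monotonicity established above, $outdeg^T(B)\leq outdeg^T(E(B))$ for each $B$, which gives the bound.

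The main obstacle is checking that upward expansion is monotone. When $p$ is added, the old incoming edge into $T[B]$ is not an outgoing edge, so nothing is lost. The other child edge of $p$ becomes a new outgoing edge, so the count increases by one. Thus both expansion directions keep the outdegree nondecreasing, and I expect the argument to go through.
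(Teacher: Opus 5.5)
Your proposal is correct and is essentially the expansion argument the paper gives for \LEM{degree bound}, adapted to out-edges; the paper states this rooted version without proof as a modification of that lemma. Downward and upward growth steps each raise the outdegree by one, and once the expanded blocks partition $V(T)$ the $\kr-1$ inter-block edges each contribute exactly one to the total. The remark about giving the root ``the status of an extra vertex'' can be dropped: connectivity of $T$ already guarantees that some uncovered vertex is either a child of a covered vertex or the parent of the top of an expanded block, so one of your two steps always applies.
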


\LEM{taxa bound} also requires some adjustment. Consider a block $B$ of $\Fr$ where $T|B$ has $c\geq 1$ cherries (here $T$ is an arbitrary tree from $\T$). \stevenfinal{To avoid triggering the subtree reduction it must hold that, for each such cherry $\{x,y\}$, at 
least one tree $T \in \T$ has the property that some edge of $T$ not within $T[B]$ has its tail on the \leonew{(undirected)} path \leonew{between} $x$ \leonew{and} $y$ in $T$.}
Hence, $\sum_{T} outdeg^T(B) \geq c$. If $\sum_{T} outdeg^T(B) = c$ then it is possible that $B$ has up to $r(2c-1)$ taxa. This is because a rooted binary tree on $c$ leaves has $2(c-1) = 2c-2$ edges, each of which %
\leonew{can correspond}
to an $r$-chain, but $B$ can also have an extra $r$-chain that feeds into the top of the tree.
This is summarized in the next lemma; note that the lemma only holds for non-singleton blocks, as $\sum_T outdeg^T(B) = 0$ for singleton blocks. See also Figure~\ref{fig:rootedlem}.

\begin{figure}
    \centering
    \includegraphics[width=\linewidth]{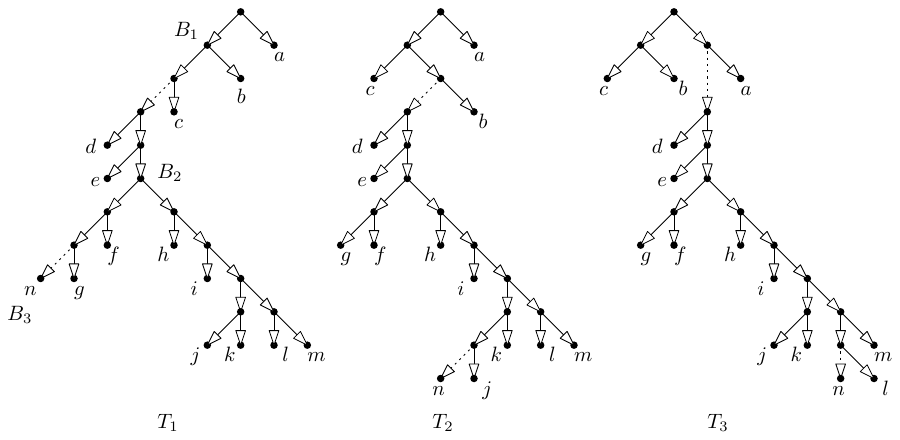}
    \caption{Illustration of Lemma~\ref{LEM rooted taxa bound}. Block~$B_2=\{d,e,\ldots ,m\}$ has total outdegree, over all trees, of~$3$, so it may contain up to~$5r$ taxa. For ease of illustration, we used chains of length~$2$, leading to~$|B_2|=10$.}
    \label{fig:rootedlem}
\end{figure}

\begin{Lemma}\label{LEM rooted taxa bound}
    The number of taxa in a non-singleton block $B$ of $\Fr$ is at most
    \begin{align*}
        |B| &\leq  2r \sum_T outdeg^T(B) - r
    \end{align*}
    with $\rrvalue$ \leorevise{and~$t$ the number of trees}.
\end{Lemma}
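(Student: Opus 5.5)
We mirror the proof of Lemma~\ref{LEM taxa bound}, replacing degree by outdegree and using the rooted structure. Fix a non-singleton block $B$ and write $d=\sum_{T\in\T} outdeg^T(B)$. Pick an arbitrary $T^*\in\T$ and let $c\geq 1$ be the number of cherries of $T^*|B$. The discussion preceding the lemma gives $d\geq c$: for each cherry $\{x,y\}$ some tree must have an edge leaving $T[B]$ whose tail lies on the $x$--$y$ path, since otherwise $\{x,y\}$ would induce a common pendant subtree and the subtree reduction would apply. Distinct cherries give distinct such edges, because the $x$--$y$ paths of distinct cherries are disjoint.

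We then build $T_B^{\mbox{full}}$ from $T^*|B$ exactly as before. Each edge $e$ of $T^*|B$ is subdivided by $\sum_T n(e,T)$ vertices, where $n(e,T)$ counts the tails of outgoing edges on the path in $T[B]$ corresponding to $e$. In addition, the root is treated as carrying a pendant ``root edge'' (the incoming edge of $T[B]$), which is also subdivided. We delete all leaves, suppress newly created degree-2 vertices, and call the result $T_B$. In the rooted setting $T_B$ is a rooted tree, with the root edge kept, whose leaves and subdivision vertices number at most $d$.

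The key counting step is to bound the number of edges of $T_B$, including the root edge, by $2d-1$. We show this by induction on $d$. For the base case $d=1$ we have $c=1$ and $T_B$ is a single root-edge path, giving at most one edge. For the inductive step we remove a leaf of $T_B$ (or a subdivision vertex) and suppress, as in the unrooted induction, losing at most two edges per unit of $d$. Equivalently, a rooted binary tree with $c$ leaves has $2c-2$ edges plus one root edge, and each extra outgoing edge beyond $c$ adds at most two edges.

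The charging step then proceeds as in the unrooted proof. Every maximal chain of $T_B^{\mbox{full}}$ has at most $r$ leaves, since otherwise the chain reduction applies; the proof of Lemma~\ref{LEM safe chain reduction rooted} confirms this bound is correct in the rooted case. Moreover, leaves of distinct maximal chains are charged to distinct edges of $T_B$. The ``top'' chain feeding into the root is charged to the root edge. This gives $|B|\leq r(2d-1)=2rd-r$.

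We expect the main obstacle to be the case analysis of the charging step, namely where leaf deletion could produce a degree-1 vertex. This is excluded by the subtree reduction. We also need to handle the root correctly, that is, to show the top chain really does occupy its own edge, and to treat the case where $B$ is itself a common chain (when $c=1$ and $d$ may equal $1$), where the bound $r$ follows directly from the chain reduction.
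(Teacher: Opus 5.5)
Your proposal is correct and is essentially the argument the paper intends. The paper only sketches this lemma: each cherry of $T^*|B$ forces an outgoing edge, so $\sum_T outdeg^T(B)\geq c$, and a rooted binary tree on $c$ leaves has $2c-2$ edges plus one extra $r$-chain feeding into the top. Your root-edge version of the $T_B$ construction from Lemma~\ref{LEM taxa bound} is the natural rigorous form of that sketch: it gives $d+1$ vertices of degree at most $2$, hence at most $2(d+1)-3=2d-1$ edges, each carrying at most $r$ leaves.

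Three points should be stated more carefully. First, the root edge must receive no subdivision vertices. The incoming edge of $T[B]$ has its tail outside $T[B]$, so it contributes nothing to $d$; subdividing it would break your count of at most $d$ low-degree vertices below the root. Second, the top endpoint of the root edge is the single extra low-degree vertex, and it is exactly what turns $-3r$ into $-r$. Third, the bound of $r$ leaves per maximal chain does not come from the proof of Lemma~\ref{LEM safe chain reduction rooted}. It holds because, with the root of $T^*|B$ kept at degree $3$, every chain of $T_B^{\mbox{full}}$ is a consistently oriented common chain of $\T$, so a longer one would trigger the chain reduction.
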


This means that \THM{kernel size} can also be adjusted to incorporate rooted agreement forests and provide a very similar result. 
\begin{Theorem} \label{THM rooted kernel size}
    Let $\T$ be a set of~\leorevise{$t$} rooted \leo{binary} phylogenetic trees on $X$. 
    \leonew{If there exists a rooted agreement forest for~$\T$ of size at most~$k_r$ and neither the subtree nor chain reduction is applicable, then}
    every tree in $\T$ has at most \stevenlater{$\rks$} taxa, \leonew{with $\rrvalue$.}
\end{Theorem}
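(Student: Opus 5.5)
We follow the template of the proof of \THM{kernel size}, replacing \LEM{taxa bound} by \LEM{rooted taxa bound} and \LEM{degree bound} by Lemma~\ref{root:newguy}. Let $\Fr$ be a rooted agreement forest for $\T$ of size at most $\kr$; since adding blocks only weakens the bound we aim for, we may take $|\Fr| = \kr' \leq \kr$ and argue with $\kr'$, checking at the end that the expression $(2tr+1)(\kr-1)-r$ is monotone in $\kr$. The plan is to split $\Fr$ into its singleton blocks $\mathcal{S}$ and its non-singleton blocks $\mathcal{N}$, and sum $|X| = |\mathcal{S}| + \sum_{B\in\mathcal{N}} |B|$.

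For the non-singleton blocks, \LEM{rooted taxa bound} gives
\begin{align*}
\sum_{B\in\mathcal{N}} |B| \leq 2r\sum_{T\in\T}\sum_{B\in\mathcal{N}} outdeg^T(B) - r|\mathcal{N}|.
\end{align*}
Since outdegrees are nonnegative, Lemma~\ref{root:newguy} bounds the inner double sum by $t(\kr'-1)$. Hence
\begin{align*}
|X| \leq |\mathcal{S}| + 2rt(\kr'-1) - r|\mathcal{N}|.
\end{align*}
Using $|\mathcal{S}| = \kr' - |\mathcal{N}|$, this becomes $\kr' + 2rt(\kr'-1) - (r+1)|\mathcal{N}|$.

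The target is $(2tr+1)(\kr-1) - r = 2tr(\kr-1) + \kr - 1 - r$. Comparing with the bound above, it suffices to show $(r+1)|\mathcal{N}| \geq r+1$, i.e.\ that $\mathcal{N}\neq\emptyset$. This is the step I expect to be the main (though small) obstacle: we need to exclude the case in which every block is a singleton, and handle the degenerate small cases.

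If every block of $\Fr$ is a singleton, then $|X| = \kr' \leq \kr$. So we only need $\kr \leq (2tr+1)(\kr-1) - r$. This holds whenever $\kr \geq 2$, because $r\geq 3$ and $t\geq 2$ give $(2tr+1)(\kr-1) - r \geq 13(\kr-1) - r$, and one checks this against $r \leq t+1$. If that check turns out to be tight, I would instead argue directly via the common subtree reduction that a cherry in some tree yields a non-singleton block or else a contradiction with $\kr'$ being minimal.

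The case $\kr = 1$ means all trees are identical. Then the subtree reduction reduces $X$ to a single taxon, and the bound must be read accordingly (possibly assuming $\kr\geq 2$ as in \LEM{taxa bound}). With $\mathcal{N}\neq\emptyset$ and the monotonicity in $\kr'$ checked, the claimed bound follows.
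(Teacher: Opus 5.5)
Your proof is correct and follows the paper's argument: split off the singleton blocks, bound the non-singleton blocks with Lemma~\ref{LEM rooted taxa bound} and Lemma~\ref{root:newguy}, and use the existence of at least one non-singleton block to obtain the single $-r$ term. The paper secures such a block by merging two singletons, whereas you treat the all-singleton case by direct arithmetic (and you rightly note that $\kr\geq 2$ is implicitly needed). In that arithmetic, the detour through $13(\kr-1)-r$ and $r\le t+1$ is not the right justification; just note that $(2tr+1)(\kr-1)-r-\kr = 2tr(\kr-1)-r-1 \geq r(2t-1)-1>0$ for $\kr\geq 2$.
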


\begin{proof}
\leonew{Consider a rooted agreement forest~$\Fr$ of size at most~$k_r$. We may assume that at least one block of~$\Fr$ has size at least~$2$ since otherwise we can simply merge two arbitrary blocks. We can then bound the number of leaves in each tree as follows.}

    \begin{align*}
        |X|   &\leq \sum_{B\in\Fr} |B|\\
&= \sum_{|B|=1} 1 + \sum_{|B|>1} |B|\\
            &\leq \kr-1 + \sum_{|B|>1} \left(  2r \sum_T outdeg^T(B) - r \right) & (\text{\LEM{rooted taxa bound}}) \\
&\leq \kr-1 + \sum_{B} \left(  2r \sum_T outdeg^T(B) \right) - r & \\
            &\leq \kr-1 + 2r\sum_T (\kr - 1) -r & (\text{Lemma \ref{root:newguy}})\\
            &= \kr-1 + 2tr(\kr-1) - r\\
            &= (2tr+1)(\kr-1) -r.
    \end{align*}
\end{proof}

\stevenlater{For $t=2$ and thus $r=3$ this gives $13\kr - 16$. We note that when restricted to the subtree and chain reduction rule the analysis in  \cite{kelk2023cyclic} also yields a bound of $13\kr-16$ for $t=2$; more precisely, $13d_{\text{rSPR}} - 3$ where 
$d_{\text{rSPR}}$ (the \emph{rooted Subtree Prune and Regraft} distance) is equal to $\kr - 1$. Their bound is tight\footnote{\stevenlater{The construction shown in Figure 3 of \cite{kelk2023cyclic} to demonstrate tightness for the three reduction rules considered there, can be adapted to show tightness when only the subtree and chain reduction rule are considered. This can be achieved by placing 3 taxa on each ``edge side'' of the underlying generator.}}, including additive terms, and thus so is ours.}

\emph{Remark.} The second family of tight examples given for unrooted trees in Section~\ref{subsec:tightness}, which shows that the $k$ cannot be reduced to $k-1$ in $\rvalue$, works unchanged
for rooted trees; the trees can \stevenlater{simply} be rooted on the edge \leo{incident to} taxon $1$.

\stevenrevise{
However, a slightly more complex construction is required to show that for $t\geq 2$ rooted trees, $t+1$ cannot be reduced to $t$ in $\rvalue$.   We construct the following set of $t$ trees. Every tree contains a chain $C'$ with $m = 2t+1$ taxa, and $2t$ other distinct taxa labeled $x_i$ and $y_i$, $1  \leq i  \leq t$. To construct $T_i$ we start by taking a cherry on $\{x_1, y_1\}$, placing this on the left side of the root, and on the right side of the root we place the remaining taxa
$x_2, y_2, x_3, y_3, \ldots, x_t, y_t$ in a chain (so there are two cherries in total). We then cyclically increment the indices of the labels $(i-1)$ times. Note in particular that applying a cyclical increment to $x_{t}$ and $y_{t}$ sends them back to $x_1$ and $y_1$ respectively. We complete the construction of $T_i$ by inserting the chain $C'$ between taxa $x_{i+1}$ and $y_{i+1}$ (for $i<t$) or
between taxa $x_{1}$ and $y_{1}$ (for $i=t$). The construction is summarized in
Figure \ref{FIG_rooted_tight}. Note that for clarity the figure assumes $t\geq 3$, but the construction also works for $t=2$.
}

\stevenrevise{
For $t=2$, truncating $C'$ to length $2$ reduces the size of a maximum agreement forest from 4 to 3 (we omit details), so truncation to length $t$ is not safe for $t=2$ (at $k=3$). So, we assume $t \geq 3$. It is easy to verify that, if the chain $C'$ is truncated to length $t$, the resulting trees have an agreement forest with at most $2t$ blocks: the $t$ taxa from the truncated chain become singletons, together with blocks $\{x_i, y_i\}$ for $1 \leq i \leq t$. We will show that prior to truncation an agreement forest has at least $2t+1$ blocks. From this we will conclude that a truncation length of $\min \{ \max \{ k,3\}, t \}$  is not safe, because when taking $k=2t$ (noting that $\min \{ \max \{ 2t,3\}, t \} = t$)  the original trees do not have an agreement forest of size at most $2t$, but the reduced trees do.
So, consider a maximum agreement forest of the
original trees. If there is no block $B$ such that
$|B \cap C'| \geq 2$, then there are at least $m = 2t+1$ blocks in the forest, so we are done. Hence, assume that
there is at least one block $B$ where $|B \cap C'| \geq 2$ and let $c, d$ be distinct taxa in $B \cap C'$. Observe that $B$ cannot contain any taxon from outside $C'$. To see why, suppose it contained $x_j$ for some $j$ (the same reasoning will hold for $y_j$). In some tree $x_j$ appears in the cherry left of the root, i.e. `above' $C'$, while in some tree $x_j$ appears in the cherry `underneath' $C'$: these two trees induce different topology subtrees on $\{c,d,x_j\}$, violating the definition of an agreement forest. Moving on, it follows that no two taxa from outside $C'$ can be in the same block $B' \neq B$ of the agreement forest. This is because, by construction, for every two distinct taxa in $\{x_1, y_1, ..., x_t, y_t\}$ there exists some tree $T_j$ in which one of the two taxa is above $C'$, and the other taxon is below $C'$, so $T_j[B]$ and $T_j[B']$ would intersect, again contradicting the definition of an agreement forest. This means that the taxa $\{x_1, y_1, ..., x_t, y_t\}$ induce $2t$ singleton blocks in the forest. Counting $B$ too, that yields at least $2t+1$ blocks.
}

\begin{figure}[h]
    \begin{center}
    \begin{tikzpicture}[scale = 0.4]

    \node (F1) at (-3,0) {$T_1$};
    \vertex (0) at (0,0) {};

    \vertex [label=below:$x_1$] (3) at (-3,-3) {};
    \vertex [label=below:$y_1$] (4) at (-1,-3) {};
    \vertex [label=below:$x_2$] (1) at (1,-3) {};
    \node (C1) [draw, line width = 1pt, minimum width=0.5cm, minimum height=0.5cm] at (3.5, -3.5) {$\overrightarrow{C'}$};
    \vertex [label=below:$y_2$] (2) at (4,-6) {};

    \vertex [label=below:$x_t$] (x) at (6,-8) {};
    \vertex [label=below:$y_t$] (y) at (8,-8) {};

    \vertex (c) at (-2,-2) {};
    \vertex (r1) at (2,-2) {};
    \vertex (r4) at (5,-5) {};
    \vertex (re) at (7,-7) {};
    
    \draw
    (0) edge[->] (c) edge[->] (r1)
    (c) edge[->] (3) edge [->] (4)
    (r1) edge[->] (C1) edge[->] (1)
    (C1) edge[->] (r4)
    (r4) edge[->] (2) edge[->, dotted] (re)
    (re) edge[->] (x) edge[->] (y)
    ;
    
    \node (F1) at (9,0) {$T_2$};
    \vertex (0) at (12,0) {};

    \vertex [label=below:$x_2$] (3) at (9,-3) {};
    \vertex [label=below:$y_2$] (4) at (11,-3) {};
    
    \vertex [label=below:$x_3$] (1) at (13,-3) {};
    \node (C1) [draw, line width = 1pt, minimum width=0.5cm, minimum height=0.5cm] at (15.5, -3.5) {$\overrightarrow{C'}$};
    \vertex [label=below:$y_3$] (2) at (16,-6) {};

    \vertex [label=below:$x_{1}$] (x) at (18,-8) {};
    \vertex [label=below:$y_{1}$] (y) at (20,-8) {};

    \vertex (c) at (10,-2) {};
    \vertex (r1) at (14,-2) {};
    \vertex (r4) at (17,-5) {};
    \vertex (re) at (19,-7) {};
    
    \draw
    (0) edge[->] (c) edge[->] (r1)
    (c) edge[->] (3) edge [->] (4)
    (r1) edge[->] (C1) edge[->] (1)
    (C1) edge[->] (r4)
    (r4) edge[->] (2) edge[->, dotted] (re)
    (re) edge[->] (x) edge[->] (y)
    ;

    \node (F1) at (21,0) {$T_t$};
    \vertex (0) at (24,0) {};

    \vertex [label=below:$x_{t}$] (3) at (21,-3) {};
    \vertex [label=below:$y_{t}$] (4) at (23,-3) {};
    \vertex [label=below:$x_1$] (1) at (25,-3) {};
    \node (C1) [draw, line width = 1pt, minimum width=0.5cm, minimum height=0.5cm] at (27.5, -3.5) {$\overrightarrow{C'}$};
    \vertex [label=below:$y_1$] (2) at (28,-6) {};

    \vertex [label=below:$x_{t-1}$] (x) at (30,-8) {};
    \vertex [label=below:$y_{t-1}$] (y) at (32,-8) {};

    \vertex (c) at (22,-2) {};
    \vertex (r1) at (26,-2) {};
    \vertex (r4) at (29,-5) {};
    \vertex (re) at (31,-7) {};
    
    \draw
    (0) edge[->] (c) edge[->] (r1)
    (c) edge[->] (3) edge [->] (4)
    (r1) edge[->] (C1) edge[->] (1)
    (C1) edge[->] (r4)
    (r4) edge[->] (2) edge[->, dotted] (re)
    (re) edge[->] (x) edge[->] (y)
    ;
        
    \end{tikzpicture}
    \caption{If we take the common chain $C'$ to have length \stevenrevise{$2t+1$} then truncating it to a length $t$ common chain $C$ alters the size of a maximum agreement forest from at least $2t+1$ to at most $2t$.}
    \label{FIG_rooted_tight}
    \end{center}
\end{figure}

\section{Future Work}
Although the truncation length $r$ used in our chain reduction is in some sense tight, it is unclear whether the overall bound on the size of the kernel is tight \stevenlater{for $t>2$}. \leonew{For~$t=2$ our bounds perfectly match \stevenlater{existing}
bounds which are known to be tight under subtree and chain reduction.} 
The main question is whether for $t>2$ the counting in Lemmas \ref{LEM taxa bound} and  \ref{LEM degree bound} \stevenlater{(and their rooted analogues)} can be undertaken more carefully. For $t=2$  the introduction of generators made the counting much easier \cite{KelkL18}, and subsequently became the basis for new, more powerful reduction rules \cite{kelk2023cyclic,kelk2024deep}. Generators are in essence a static, graph-based representation of maximum agreement forests, but they do not work for $t>2$. Finding an alternative to generators for $t>2$ seems an important direction for future research.

Relatedly, it is natural to ask whether the design and deployment of new, additional reduction rules can produce a smaller kernel. A natural starting point would be to analyze the extra rules that have already been developed for $t=2$. For the rooted problem this is the  `3-2 chain reduction' described in \cite{kelk2023cyclic}. In contrast, for the unrooted problem there are eight extra reduction rules known \cite{kelk2024deep}. Although the generator machinery is not available to us in the $t>2$ regime some of these reduction rules might still be correct. In such a case the main challenge will be measuring the impact of the reduction rules on the size of the kernel.  

Currently, depending on whether $t \ll k$, $k \ll t$ or $t \approx k$ we obtain a kernel %
\leonewer{with}
$O( t^2k )$, $O( tk^2)$ or $O(t^3)$ (equivalently $O(k^3)$) leaves respectively.\footnote{\stevenrevise{Measured in bits the size of the kernel is slightly larger. This is because a reasonable encoding of $t$ binary phylogenetic trees each on $n$ leaves requires $N = \Theta(tn\log{n})$ bits.}}
In an applied context it is plausible that the first scenario (comparing a small number of highly discordant gene trees) or the second scenario (comparing a large number of broadly similar gene trees) will often occur. Possibly specialized reduction rules can be designed for each of these scenarios separately. 

\stevenrevise{Perhaps the most tantalizing open question is whether rMAF and uMAF admit a kernel whose size is %
polynomial in \emph{only} $k$. The results in the present article show that they admit a kernel whose size is at most polynomial in $k$ \emph{and} $t$. 
On the other hand, the 
branching algorithms for rMAF and uMAF with running times $O^{*}(c^k)$ in   
\cite{shi2018parameterized,shi2014algorithms} automatically imply the existence of a `trivial' kernel whose size is (single) exponential in only $k$ (see e.g. Lemma 2.2 of \cite{DBLP:books/sp/CyganFKLMPPS15} for a proof of this implication; it is a foundational fact in parameterized complexity). The number of distinct binary phylogenetic trees on $n$ leaves is bounded by a function of $n$, but the bound is super-exponential (see Proposition 2.1.4 and Corollary 2.2.4 of \cite{SempleS03}), so to obtain a polynomial kernel in $k$ it will be necessary to produce a reduced instance on at most poly($k$) trees and poly$(k)$ leaves.
A potentially relevant fact here is that, for rMAF and uMAF, there is no obvious function $f$ of $k$ whereby we can say ``If $t \geq f(k)$ then a MAF with at most $k$ blocks does not exist.'' To see this, observe that two distinct trees, each on $n/2$ leaves, can be connected together with a single edge to obtain $O(n^2)$ trees on $n$ leaves, which have a MAF of size only 2.}
\stevenrevise{Indeed, it might be that a kernel whose size is bounded by a polynomial of $k$ simply does not exist, under reasonable complexity-theoretic assumptions. The literature on kernel-size lower bounds (using techniques such as \emph{cross-composition} \cite{bodlaender2014kernelization}; see also the chapter on lower bounds in \cite{fomin2019kernelization}) could be very useful in this regard.}

\section{Acknowledgements}

Ruben Meuwese was supported by the Dutch Research Council (NWO), project OCENW.GROOT.2019.015. \stevenrevise{We thank the anonymous reviewers for their
helpful feedback. We also thank Enrico Iurlano and Johannes Varg for pointing out an error in the tightness construction in an earlier version of the article}.

\bibliography{kernelV3}{}
\bibliographystyle{plainnat}

\end{document}